\newtheorem{theorem}{Theorem}
\newtheorem{lemma}[theorem]{Lemma}
\newtheorem{sublemma}{Sublemma}
\newtheorem{proposition}[theorem]{Proposition}
\newtheorem{definition}{Definition}
\newtheorem{problem}[theorem]{Problem}
\newtheorem{remark}[theorem]{Remark}
\newcommand{\inter}{\rm int}
\newcommand{\x}{\mathbf{x}}
\newcommand{\p}{\mathbf{p}}
\newcommand{\q}{\mathbf{q}}
\newcommand{\y}{\mathbf{y}}
\newcommand{\oo}{\mathbf{o}}
\newcommand{\K}{\mathbf{K}}
\newcommand{\PP}{\mathbf{P}}
\newcommand{\Ee}{\mathbb{E}}
\newcommand{\Ss}{\mathbb{S}}
\newcommand{\Mm}{\mathbb{M}}
\newcommand{\Rr}{\mathbb{R}}
\newcommand{\Ed}{\Ee^d}
\newcommand{\Rd}{\Rr^d}
\newcommand{\Md}{\Mm^d}
\newcommand{\noshow}[1]{}
\title{On $k$-diametral point configurations in Minkowski spaces
\footnote{Keywords and phrases: point configuration, $k$-diametral point configuration, $k$-antipodal point configuration, Minkowski $d$-space, Euclidean $d$-space. \newline \hspace*{.35cm} 2010 Mathematics Subject Classification: 52A20, 52A21, 52C35.}}
\author{K\'{a}roly Bezdek\thanks{Partially supported by a Natural Sciences and 
Engineering Research Council of Canada Discovery Grant.} and Zsolt L\'angi\thanks{Partially supported by the National Research, Development and Innovation Office, NKFI, K-119670, the J\'anos Bolyai Research Scholarship of the Hungarian Academy of Sciences, and grants BME FIKP-V\'IZ and \'UNKP-19-4 New National Excellence Program by the Ministry of Innovation and Technology.}
}
\date{}
\begin{document}

\maketitle

\begin{abstract}
The structure of $k$-diametral point configurations in Minkowski $d$-space is shown to be closely related to the properties of $k$-antipodal point configurations in $\Rd$. In particular, the maximum size of $k$-diametral point configurations of Minkowski $d$-spaces is obtained for given $k\geq 2$ and $d\geq 2$ generalizing Petty's results (Proc. Am. Math. Soc. 29: 369-374, 1971) on equilateral sets in Minkowski spaces. Furthermore, bounds are derived for the maximum size of $k$-diametral point configurations in Euclidean $d$-space. In the proofs convexity methods are combined with volumetric estimates and combinatorial properties of diameter graphs.
\end{abstract}

\section{Introduction}\label{sec:intro}

Let $\K_{\oo}\subset \Rd$ be an $\oo$-symmetric convex body, i.e., a compact convex set with nonempty interior symmetric about the origin $\oo$ in $\Rd$. Let ${\cal{K}}_{\oo}^{d}$ denote the family of $\oo$-symmetric convex bodies in $\Rd$.  Moreover, let $\| \cdot \|_{\K_{\oo}}$ denote the norm generated by $\K_{\oo}\in {\cal{K}}_{\oo}^{d}$, which is defined by $\|\x\|_{\K_{\oo}}:=\min\{\lambda \geq 0\  |\  \x\in \lambda\K_{\oo}  \}$ for $\x\in \Rd$. Furthermore, let us denote $\Rd$ with the norm $\| \cdot \|_{\K_{\oo}}$ by $\Md_{\K_{\oo}}$ and call it the {\it Minkowski space of dimension $d$ generated by} $\K_{\oo}$. The following definition introduces the central notion for our paper.

\begin{definition}\label{k-diametral}
We call the labeled point set $X:=\{\x_1,\x_2, \dots , \x_n\}\subset\Rd$ a {\rm point configuration of $n$ points} in $\Rd$, where $n\geq 1$ and $d\geq 2$. Here
the points $\x_1,\x_2, \dots , \x_n$ are not necessarily all distinct and therefore $n$ is not necessarily equal to the number of distinct points in $X$. Next, let $X:=\{\x_1,\x_2, \dots , \x_n\}\subset\Rd$ be a point configuration of $n$ points with some positive diameter in $\Md_{\K_{\oo}}$, i.e., let  ${\rm diam}_{\Md_{\K_{\oo}}}(X):=\max\{\|\x_i-\x_j\|_{\K_{\oo}}\ |\ 1\leq i< j\leq n\}>0$. Let $k\geq 2$ be an integer. Then we say that $X:=\{\x_1,\x_2, \dots , \x_n\}\subset\Rd$ is a {\rm $k$-diametral point configuration of $n$ points in $\Md_{\K_{\oo}}$} if any $k$-tuple $\x_{n_1}, \x_{n_2}, \dots , \x_{n_k}$, $1\leq n_1< n_2 < \dots  < n_k\leq n$ chosen from $X$ contains two diametral points, i.e., it contains two points say, $\x_{n_i}$ and $\x_{n_j}$, $1\leq i<j\leq k$ such that $\| \x_{n_i}- \x_{n_j}\|_{\K_{\oo}} ={\rm diam}_{\Md_{\K_{\oo}}}(X)$. In particular, a $2$-diametral point configuration is called a {\rm diametral} or simply an {\rm equilateral} point configuration, and a $3$-diametral point configuration is called an {\rm almost diametral} point configuration. Finally, let us denote the largest $n$ for which there exists a $k$-diametral point configuration of $n$ points in $\Md_{\K_{\oo}}$, by $f_k(\Md_{\K_{\oo}})$ and call it the {\rm $k$-diametral number} of point configurations in $\Md_{\K_{\oo}}$.
\end{definition}

\begin{remark}\label{Petty}
We note that Petty \cite{Pe} proved the inequality $f_2(\Md_{\K_{\oo}})\leq 2^d$ for all $d\geq 2$ and $\K_{\oo}\in {\cal{K}}_{\oo}^{d}$. Moreover, he has shown that $f_2(\Md_{\K_{\oo}})=2^d$ if and only if $\K_{\oo}$ is an affine $d$-cube of $\Rd$ in which case every equilateral point configuration of $2^d$ points is identical to the vertex set of a homothetic affine $d$-cube.
\end{remark}

\begin{remark}\label{Polyanskii}
Let $\Ed$ denote the $d$-dimensional Euclidean space. Recall that Polyanskii \cite{Po} calls a subset of $\Ed$ an {\rm almost-equidistant diameter set} if it has diameter one and if among any three distinct points of the subset some two are at unit distance apart. Thus, any almost-equidistant diameter set of $\Ed$ is an almost diametral point configuration of $\Ed$ in the sense of Definition \ref{k-diametral} but, not necessarily the other way around. As the elegant algebraic method of \cite{Po} extends to point configurations of $\Ed$, therefore the upper bound $2d+4$ proved in \cite{Po} works for $f_3(\Ed)$ as well, i.e.,  $f_3(\Ed)\leq 2d+4$ holds for all $d\geq 2$. Moreover, by taking the vertex set of a regular $d$-simplex in $\Ed$ with each vertex having multiplicity two, one obtains an almost diametral point configuration of $2d+2$ points in $\Ed$. Thus, one may wonder whether $f_3(\Ed)=2d+2$ holds for all $d\geq 2$. We note that Part {\bf (iii)} of Propositon \ref{basic-estimates-Part1} gives a positive answer to this question for $d=2,3$. (See also Remark~\ref{Alon}.)
\end{remark}

The problem of estimating (resp., computing) $f_k(\Md_{\K_{\oo}})$ seems to be a difficult question in general. On the other hand, one can connect this question to other important problems of geometry and obtain some basic estimates for $f_k(\Md_{\K_{\oo}})$. Next, we introduce the definitions and relevant results needed and then, we state those basic estimates for $f_k(\Md_{\K_{\oo}})$ in Proposition \ref{basic-estimates-Part1}.

\begin{definition}\label{Borsuk-number}
Let $b(\Md_{\K_{\oo}})$ denote the smallest positive integer $m$ such that any finite set $Y\subset \Md_{\K_{\oo}}$ with diameter ${\rm diam}_{\Md_{\K_{\oo}}}(Y)>0$ can be partitioned into $m$ sets each having diameter smaller than ${\rm diam}_{\Md_{\K_{\oo}}}(Y)$ in $\Md_{\K_{\oo}}$. We call $b(\Md_{\K_{\oo}})$ the {\rm Borsuk number} of the Minkowski space $\Md_{\K_{\oo}}$.
\end{definition}
 
\begin{definition}\label{homothetic-covering-number}
Let $h(\Rd)$ denote the smallest positive integer $l$ such that the convex hull ${\rm conv}(Y)$ of any finite set $\emptyset\neq Y\subset \Rd$ can be covered by $l$ smaller positive homothetic copies, i.e., there exist $0<\lambda_i<1$, $\y_i\in \Rd$ for $1\leq i\leq l$ such that ${\rm conv}(Y)\subseteq \bigcup_{i=1}^{l}\left(\y_i+\lambda_i{\rm conv}(Y)\right)$. We call $h(\Rd)$ the {\rm Hadwiger number} of $\Rd$.
\end{definition}

\noindent It is well known that Borsuk \cite{Bo} asked whether $b(\Ed)=d+1$ for any $d>2$ and Hadwiger \cite{Ha} conjectured that if $d>2$, then $h(\Rd)=2^d$. Both problems have become longstanding open problems in geometry. For recent surveys on the status of these problems we refer the interested reader to \cite{BeKh}, \cite{CaSc}, and \cite{Ra}. Here we recall the following results that give the core upper estimates for Proposition \ref{basic-estimates-Part1}. On the one hand, Lassak \cite{La} has proved that $b(\Ed)\leq 2^{d-1}+1$ for any $d>1$. On the other hand, Schramm \cite{Sch} proved the inequality $b(\Ed)\leq 5d\sqrt{d}(4+\ln d)\left(\frac{3}{2}\right)^{\frac{d}{2}}$ for all $d>1$. Furthermore, Rogers \cite{Ro} (see also \cite{RoZo}) has proved that $h(\Rd)\leq \binom{2d}{d}d(\ln d+\ln\ln d +5)=O(4^d\sqrt{d}\ln d)$ holds for any $d>1$. Lassak \cite{Las} improved this upper bound of Rogers for some small values of $d$ by showing that $h(\Rd)\leq (d+1)d^{d-1}-(d-1)(d-2)^{d-1}$ for any $d>1$. Finally, just very recently Huang, Slomka, Tkocz, and Vritsiou \cite{HSTV} improved the upper bound of Rogers for sufficiently large values of $d$ by showing that there exist universal constants $c_1,c_2>0$ such that for all $d>1$, one has $h(\Rd)\leq c_14^de^{-c_2\sqrt{d}}$.

\begin{proposition}\label{basic-estimates-Part1}
\item{\bf (i)} For $d\geq 2$, $k\geq 2$, and $\K_{\oo}\in {\cal{K}}_{\oo}^{d}$, one has 
\begin{equation}\label{Minkowski-bounds-1}
f_2(\Md_{\K_{\oo}})\leq \frac{1}{k-1}f_k(\Md_{\K_{\oo}})\leq b(\Md_{\K_{\oo}})\leq h(\Rd)
\end{equation}
\begin{equation}\label{Minkowski-bounds-2}
\leq \min{\left\{\binom{2d}{d}d(\ln d+\ln\ln d +5), (d+1)d^{d-1}-(d-1)(d-2)^{d-1}, c_14^de^{-c_2\sqrt{d}} \right\}}.
\end{equation}

\item{\bf (ii)} $f_k(\Mm^2_{\K_{\oo}})=(k-1)f_2(\Mm^2_{\K_{\oo}})$ holds for all $k\geq 2$ and $\K_{\oo}\in {\cal{K}}_{\oo}^{2}$.

\item{\bf (iii)} $f_k(\Ed)=(k-1)(d+1)$ holds for all $k\geq 3$ and $d=2,3$ (and for $k=2$ and all $d\geq 1$). Moreover, if $k\geq 3$ and $d\geq 4$, then
\begin{equation}\label{Euclidean-bounds-1} 
(k-1)(d+1)\leq f_k(\Ed)\leq (k-1)b(\Ed)
\end{equation}
\begin{equation}\label{Euclidean-bounds-2}
\leq (k-1)\min\left\{2^{d-1}+1, 5d\sqrt{d}(4+\ln d)\left(\frac{3}{2}\right)^{\frac{d}{2}}\right\}.
\end{equation} 

\end{proposition}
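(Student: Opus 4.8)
The unifying device I would introduce first is the \emph{diameter graph}. Given a point configuration $X=\{\x_1,\dots,\x_n\}$ in $\Md_{\K_{\oo}}$, pass to the set $Y$ of its distinct points carrying multiplicities $m_1,\dots,m_t$ (so $\sum_i m_i=n$), and let $G$ be the graph on $Y$ joining two distinct points precisely when they are diametral. Since any two copies of the same point are at distance $0<{\rm diam}_{\Md_{\K_{\oo}}}(X)$, a labelled subset is free of diametral pairs exactly when its distinct points form an independent set $S$ of $G$. Hence the largest diametral-free labelled subset has size $\max_{S\ \mathrm{indep.}}\sum_{\y_i\in S}m_i$, and $X$ is $k$-diametral if and only if this maximum is at most $k-1$. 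This reformulation is the engine of the whole argument, so I would state it as the opening step.

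For part \textbf{(i)} the three inequalities of \eqref{Minkowski-bounds-1} each get a short argument. For $(k-1)f_2\le f_k$ I would take an equilateral (i.e.\ $2$-diametral) configuration realizing $f_2(\Md_{\K_{\oo}})$ and give every point multiplicity $k-1$; among any $k$ of the resulting $(k-1)f_2$ labelled points two must come from distinct original points, which are diametral, so the configuration is $k$-diametral. For $f_k\le (k-1)b(\Md_{\K_{\oo}})$ I would take a $k$-diametral configuration of $n=f_k$ points and a Borsuk partition of $Y$ into $b(\Md_{\K_{\oo}})$ classes of strictly smaller diameter; each class is diametral-free, hence an independent set of $G$, so by the reformulation its total multiplicity is at most $k-1$, and summing over the classes gives $n\le (k-1)b(\Md_{\K_{\oo}})$. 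For $b(\Md_{\K_{\oo}})\le h(\Rd)$ I would cover $\conv(Y)$ by $h(\Rd)$ homothets $\y_i+\lambda_i\conv(Y)$ with $0<\lambda_i<1$; each homothet has diameter $\lambda_i\,{\rm diam}(\conv Y)<{\rm diam}(Y)$, so sorting the points of $Y$ into the homothets containing them yields a partition into at most $h(\Rd)$ classes of smaller diameter. The second display \eqref{Minkowski-bounds-2} is then just the three cited upper bounds for $h(\Rd)$ of Rogers, Lassak, and Huang--Slomka--Tkocz--Vritsiou.

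For part \textbf{(ii)} the lower bound $(k-1)f_2\le f_k$ is already contained in \textbf{(i)}. For the matching upper bound I would combine $f_k\le (k-1)b$ from \textbf{(i)} with the planar identity $b(\Mm^2_{\K_{\oo}})=f_2(\Mm^2_{\K_{\oo}})$. One inclusion, $f_2\le b$, is the case $k=2$ of \eqref{Minkowski-bounds-1} (an equilateral set of $f_2$ points forces $f_2$ singleton Borsuk classes). The reverse inequality $b\le f_2$ is the resolution of Borsuk's problem in normed planes: $b(\Mm^2_{\K_{\oo}})=3$ unless $\K_{\oo}$ is a parallelogram, in which case it equals $4$, matching Petty's dichotomy $f_2\in\{3,4\}$. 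This planar evaluation of the Borsuk number is the one genuinely geometric ingredient, and is where the argument leaves pure graph theory.

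For part \textbf{(iii)} I would use the classical Euclidean equilateral number $f_2(\Ed)=d+1$, so \eqref{Minkowski-bounds-1} gives the lower bound $(k-1)(d+1)\le f_k(\Ed)$, realized concretely by the vertex set of a regular $d$-simplex with each vertex of multiplicity $k-1$ (the doubling construction of \textbf{(i)}); the case $k=2$ holds in every dimension by the equilateral number. For $d=2,3$ the upper bound $f_k\le (k-1)b(\Ed)$ from \textbf{(i)} is sharp, since $b(\E^2)=3$ and $b(\E^3)=4$ are known exactly, forcing $f_k(\Ed)=(k-1)(d+1)$. For $d\ge 4$ only bounds on $b(\Ed)$ are available, so substituting Lassak's $b(\Ed)\le 2^{d-1}+1$ and Schramm's estimate into $f_k\le (k-1)b(\Ed)$ yields \eqref{Euclidean-bounds-1}--\eqref{Euclidean-bounds-2}. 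The genuine content, beyond the routine multiplicity bookkeeping, lies in the exact evaluations of the Borsuk number, and I expect the planar identity $b(\Mm^2_{\K_{\oo}})=f_2(\Mm^2_{\K_{\oo}})$ in part \textbf{(ii)} to be the main obstacle.
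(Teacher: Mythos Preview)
Your proposal is correct and follows essentially the same route as the paper: the same multiplicity trick for $(k-1)f_2\le f_k$, the same Borsuk-partition count for $f_k\le (k-1)b$, the same homothet-diameter observation for $b\le h$, and the same planar identity $b(\Mm^2_{\K_{\oo}})=f_2(\Mm^2_{\K_{\oo}})$ (via the parallelogram dichotomy of Boltyanski\u{\i}--Soltan/Gr\"unbaum and Petty) plus the exact values $b(\Ee^2)=3$, $b(\Ee^3)=4$ for Parts~\textbf{(ii)} and~\textbf{(iii)}. Your diameter-graph/independent-set reformulation is a clean way to phrase the multiplicity bookkeeping, but it is a cosmetic repackaging rather than a different argument; the paper simply says each Borsuk class carries at most $k-1$ labelled points without naming the graph.
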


\begin{remark}\label{order-estimate}
We note that $5d\sqrt{d}(4+\ln d)\left(\frac{3}{2}\right)^{\frac{d}{2}}\leq2^{d-1}+1$ holds if and only if $d\geq 18$. Moreover, for every $\epsilon>0$ if $d$ is sufficiently large, then $5d\sqrt{d}(4+\ln d)\left(\frac{3}{2}\right)^{\frac{d}{2}}<\left(\sqrt{1.5}+\epsilon\right)^d=(1.224...+\epsilon)^d$.
\end{remark}

\begin{remark}
Following \cite{HL} and \cite{LN}, we may define the \emph{$n$-fold Borsuk number} $b_n(\Md_{\K_{\oo}})$ of the Minkowski space $\Md_{\K_{\oo}}$ as
the smallest positive integer $m$ such that any finite set $Y\subset \Md_{\K_{\oo}}$ with diameter ${\rm diam}_{\Md_{\K_{\oo}}}(Y)>0$ can be covered $n$-fold by $m$ sets having diameters smaller than ${\rm diam}_{\Md_{\K_{\oo}}}(Y)$ in $\Md_{\K_{\oo}}$. It may be worth noting that the proof of Part {\bf (i)} of Proposition \ref{basic-estimates-Part1} yields also the refined inequalities $\frac{1}{k-1}f_k(\Md_{\K_{\oo}})\leq \frac{1}{n} b_n(\Md_{\K_{\oo}})\leq b(\Md_{\K_{\oo}})$ replacing $\frac{1}{k-1}f_k(\Md_{\K_{\oo}})\leq b(\Md_{\K_{\oo}})$ in (\ref{Minkowski-bounds-1}) for all $n\geq 1$.
\end{remark}



\begin{remark}\label{Alon}
Let $q=p^m$ be a power of a prime number $p$, $n=4q-2$ and $d = \binom{n}{2}$. Alon \cite{Alon} proved (see also \cite{book}) that with this choice there is a set $ \subseteq \{ -1, 1 \}^d \subset \Ed$ containing $2^{n-2}$ points such that any subset of $S$ whose diameter is less than that of $S$ contains at most $\sum_{i=0}^{q-2} \binom{n-1}{i}$ points of $S$. He used this result to show that if $d$ is sufficiently large, then $b(\Ed) \geq (1.2)^{\sqrt{d}}$. Nevertheless, using the estimates $\sum_{i=0}^{q-2} \binom{n-1}{i} \leq \frac{4d}{5e} \left( \frac{256}{27}\right)^{\sqrt{\frac{d}{5}}}$ and $2^{n-2} \geq 16^{\sqrt{\frac{d}{5}}-1}$ (see \cite{book}), and the Bertrand-Chebyshev theorem on prime numbers, from his example it follows that if $d$ is sufficiently large, then for any $k \geq 2.7345^{\sqrt{d}}$ there is a $k$-diametral point configuration of at least $3.455^{\sqrt{d}}$ points in $\Ee^d$. This result shows that the natural question whether $f_k(\Ed)=(k-1)(d+1)$ holds for all $k\geq 3$ and $d\geq 4$ has a negative answer.
\end{remark}

Our next theorem, Theorem \ref{basic-estimates-Part2} below, generalizes Petty's results (Theorems 1,2, and 4 of \cite{Pe}) on equilateral sets in Minkowski spaces. It complements the results of Proposition \ref{basic-estimates-Part1} by finding and characterizing $\max_{\K_{\oo}\in {\cal{K}}_{\oo}^{d}} f_k(\Md_{\K_{\oo}})$. The details are as follows.

\begin{definition}\label{k-antipodal}
Let $X:=\{\x_1,\x_2, \dots , \x_n\}\subset\Rd$ be a point configuration of $n$ points in $\Rd$, where $n\geq 1$ and $d\geq 2$. Let $k\geq 2$ be an integer. We say that $X$ is a {\rm $k$-antipodal point configuration in $\Rd$} if any $k$-tuple $\x_{n_1}, \x_{n_2}, \dots , \x_{n_k}$, $1\leq n_1< n_2 < \dots  < n_k\leq n$ chosen from $X$ contains two antipodal points, i.e., it contains two points say, $\x_{n_i}$ and $\x_{n_j}$, $1\leq i<j\leq k$ lying on distinct parallel supporting hyperplanes of the convex hull ${\rm conv} (X)$ of $X$ in $\Rd$. In particular, a $2$-antipodal point configuration is called an {\rm antipodal} point configuration, and a $3$-antipodal point configuration is called an {\rm almost antipodal} point configuration. Finally, let us denote the largest $n$ for which there exists a $k$-antipodal point configuration of $n$ points in $\Rd$, by $F_k(d)$ and call it the {\rm $k$-antipodal number} of point configurations in $\Rd$.
\end{definition}

\begin{remark}\label{Danzer-Grunbaum}
Recall that according to Danzer and Gr\"unbaum \cite{DaGr}, $F_2(d)=2^d$ for all $d\geq 2$. Furthermore, their volumetric method combined with an earlier result of Groemer \cite{Gr} (on tiling a convex body into homothetic convex bodies) implies that if $X$ is an antipodal point configuration of $2^d$ points in $\Rd$, then $X$ must be identical to the vertex set of an affine $d$-cube.
\end{remark}

\begin{theorem}\label{basic-estimates-Part2}
The point configuration $X$ is a $k$-antipodal point configuration in $\Rd$ if and only if there exists $\K_{\oo}\in {\cal{K}}_{\oo}^{d}$ such that $X$ is a $k$-diametral point configuration in $\Md_{\K_{\oo}}$, where $d\geq 2$ and $k\geq 2$. Moreover,
\begin{equation}\label{Main-1}
F_k(d)=\max_{\K_{\oo}\in {\cal{K}}_{\oo}^{d}} f_k(\Md_{\K_{\oo}})=(k-1)2^d
\end{equation}
holds for all $d\geq 2$ and $k\geq 2$. Furthermore, $f_k(\Md_{\K_{\oo}})=(k-1)2^d$ if and only if $\K_{\oo}$ is an $\oo$-symmetric affine $d$-cube of $\Rd$ in which case every $k$-diametral point configuration of $(k-1)2^d$ points is identical to the vertex set of a homothetic affine $d$-cube with each vertex having multiplicity $k-1$.
\end{theorem}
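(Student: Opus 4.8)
The plan is to prove the equivalence via difference bodies, obtain the extremal value by a multiplicity version of the Danzer--Gr\"unbaum volume argument, and settle the equality case by a corner analysis that reduces to the $k=2$ rigidity results quoted in Remarks~\ref{Petty} and~\ref{Danzer-Grunbaum}. For the equivalence I would first record that a diametral pair is always antipodal: if $\|\x_i-\x_j\|_{\K_{\oo}}={\rm diam}_{\Md_{\K_{\oo}}}(X)=:D$, then $\x_i-\x_j$ lies on the boundary of $D\K_{\oo}$, and a supporting functional $\mathbf{u}$ there, together with the $\oo$-symmetry of $\K_{\oo}$, forces $\x_i$ to maximize and $\x_j$ to minimize $\langle\mathbf{u},\cdot\rangle$ on $X$; thus $\x_i,\x_j$ lie on distinct parallel supporting hyperplanes of ${\rm conv}(X)$, so every $k$-diametral configuration is $k$-antipodal. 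For the converse, given a $k$-antipodal $X$ I would take the norm whose unit ball is the difference body $\K_{\oo}={\rm conv}(X)-{\rm conv}(X)$ (thickened by small spikes orthogonal to the affine hull if ${\rm conv}(X)$ is not full-dimensional, which keeps it an $\oo$-symmetric convex body without changing which difference vectors are boundary points). Since $\x_i,\x_j$ are antipodal exactly when $\x_i-\x_j$ is a boundary point of this difference body, the diametral pairs for this norm are precisely the antipodal pairs, so $X$ is $k$-diametral in $\Md_{\K_{\oo}}$; in particular $\max_{\K_{\oo}}f_k(\Md_{\K_{\oo}})=F_k(d)$.

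For the value $(k-1)2^d$ I would extend the Danzer--Gr\"unbaum packing argument to allow multiplicity. Let $X$ be $k$-antipodal with $P={\rm conv}(X)$ full-dimensional and put $Q_i=\frac12(\x_i+P)\subseteq P$, so $\vol{Q_i}=2^{-d}\vol{P}$. Two homothets $Q_i,Q_j$ have overlapping interiors only if $\x_i-\x_j\in{\rm int}(P-P)$, i.e.\ only if the labels $i,j$ are \emph{not} antipodal; hence the labels whose $Q_i$ cover a fixed generic point are pairwise non-antipodal, forming an independent set in the antipodality graph on the $n$ labels, whose independence number is at most $k-1$ by $k$-antipodality. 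Integrating the covering multiplicity over $P$ yields $n\,2^{-d}\vol{P}=\sum_i\vol{Q_i}\le (k-1)\vol{P}$, so $n\le (k-1)2^d$ (the degenerate case only improves this to $(k-1)2^{\dim{\rm conv}(X)}$). The reverse inequality comes from the vertex set of an $\oo$-symmetric affine cube, whose $2^d$ pairwise-antipodal vertices, each taken with multiplicity $k-1$, form a $k$-antipodal configuration of $(k-1)2^d$ points.

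The equality case is the step I expect to be the crux: tightness forces $P$ full-dimensional and the $Q_i$ to cover $P$ exactly $(k-1)$-fold. A corner analysis then shows each vertex $v$ of $P$ occurs with multiplicity exactly $k-1$: exposing $v$ by a functional $\mathbf{u}$, in a sufficiently thin cap $\{x\in P:\langle\mathbf{u},x\rangle>\max_P\langle\mathbf{u},\cdot\rangle-\epsilon\}$ (which shrinks to $v$ as $\epsilon\to0$) only the homothets based at $v$ reach, and that cap lies in $Q_v$, so the exact density $k-1$ there equals the multiplicity of $v$. Consequently every point of ${\rm int}\,Q_v$ already has covering density $k-1$, so $Q_v$ is interior-disjoint from every other homothet; equivalently, every point of $X$ is antipodal to every vertex of $P$. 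In particular the vertices of $P$ are pairwise antipodal, so there are at most $2^d$ of them by Remark~\ref{Danzer-Grunbaum}. Moreover no point $\x_i$ of $X$ can fail to be a vertex: its smallest face $F$ would have dimension at least $1$ and contain some vertex $v'$, while a functional attaining its maximum (or minimum) on $P$ at the relative interior point $\x_i$ is constant on $F$ and hence attains the same extremum at $v'$, contradicting antipodality of $\x_i$ and $v'$. Thus all $(k-1)2^d$ points lie at the $V\le 2^d$ vertices with multiplicity $k-1$, forcing $V=2^d$; the vertices then form an antipodal set of $2^d$ points, so by Remark~\ref{Danzer-Grunbaum} (Groemer's tiling theorem) ${\rm conv}(X)$ is an affine $d$-cube.

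To identify the norm, taking $k-1$ copies of a vertex $v$ together with one copy of a different vertex $v'$ produces a $k$-tuple whose only nontrivial pair is $\{v,v'\}$, so $k$-diametrality forces $\|v-v'\|_{\K_{\oo}}=D$. Hence the $2^d$ vertices form an equilateral set, $f_2(\Md_{\K_{\oo}})=2^d$, and Petty's characterization (Remark~\ref{Petty}) yields that $\K_{\oo}$ is an $\oo$-symmetric affine cube and that $X$ is the vertex set of a homothetic affine cube with each vertex of multiplicity $k-1$; reading these implications backwards gives the converse and the description of all extremal configurations. The main obstacle is precisely this equality analysis, namely converting the tightness of the volumetric bound into the rigid statement that every point is a vertex of multiplicity $k-1$; once the corner/multiplicity argument delivers that, the reduction to the $k=2$ rigidity results (Danzer--Gr\"unbaum, Groemer, Petty) finishes the proof, whereas the equivalence and the upper bound are comparatively routine given the difference-body and multiplicity-covering viewpoints.
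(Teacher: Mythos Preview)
Your proof is correct. The equivalence via the difference body and the multiplicity version of the Danzer--Gr\"unbaum volumetric inequality match the paper's argument essentially verbatim. The genuine divergence is in the equality case. The paper proves a self-contained Lemma (their Lemma~\ref{characterization}) by induction on $d$: restricting the $(k-1)$-tiling of $\PP$ by the half-scale copies $\PP_l$ to a facet $F$ gives a $(k-1)$-tiling of $F$ by half-scale copies of $F$, and the inductive hypothesis then forces $F$ to be an affine $(d-1)$-cube; a local compatibility argument between adjacent facets upgrades this to $\PP$ being an affine $d$-cube. Your route is instead a corner/multiplicity analysis that reduces everything to the $k=2$ rigidity quoted in Remarks~\ref{Petty} and~\ref{Danzer-Grunbaum}: exposing a vertex $v$ shows its multiplicity is exactly $k-1$, hence the $k-1$ identical copies of $Q_v$ already saturate the covering there, forcing $Q_v$ to be interior-disjoint from every other $Q_i$; this makes every point of $X$ antipodal to every vertex, which both rules out non-vertex points and bounds the number of vertices by $2^d$, after which Groemer/Danzer--Gr\"unbaum pin down $\PP$ and Petty pins down $\K_{\oo}$.

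Both arguments are clean; yours has the advantage of leveraging the classical $k=2$ characterizations directly rather than redoing an inductive tiling lemma, and it also makes the final step (that $\K_{\oo}$ itself must be a cube, not just ${\rm conv}(X)$) fully explicit via the equilateral-set argument, whereas the paper records this step rather tersely. The paper's inductive lemma, on the other hand, is self-contained and does not appeal to Groemer's tiling theorem or Petty's characterization as black boxes.
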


From the point of view of geometry, it is natural to complete this section with $k$-diametral (resp., $k$-antipodal) properties of {\it point sets}, i.e., point configurations consisting of distinct points. By restricting Definition \ref{k-diametral} (resp., Definition \ref{k-antipodal}) to point sets, let us denote the largest cardinality of $k$-diametral (resp., $k$-antipodal) point sets in $\Md_{\K_{\oo}}$ (resp., $\Rd$), by $g_k(\Md_{\K_{\oo}})$ (resp., $G_k(d)$) and call it the {\rm $k$-diametral number of point sets} (resp., {\rm $k$-antipodal number of point sets}) in $\Md_{\K_{\oo}}$ (resp., $\Rd$). Clearly, $g_k(\Md_{\K_{\oo}})\leq f_k(\Md_{\K_{\oo}})$ holds for all $k\geq 2$, $d\geq 2$ and $\K_{\oo}\in {\cal{K}}_{\oo}^{d}$ and so, the upper bounds already stated for $f_k(\Md_{\K_{\oo}})$ apply to $g_k(\Md_{\K_{\oo}})$ as well. Also, it is obvious that $G_k(d)\leq F_k(d)=(k-1)2^d$ holds for all $k\geq 3$, $d\geq 2$ with $G_2(d)=F_2(d)=2^d$ for all $d\geq 2$. Furthermore, we have

\begin{theorem}\label{Euclidean-k-diametral-point-sets}
\item{\bf (i)} $G_k(2)=2k$ for all $k\geq 2$. Furthermore, $S \subset \Rr^2$ is a $k$-antipodal point set with ${\rm card}(S)=2k$, if and only if $\PP:={\rm conv} (S)$ is a $(2s)$-gon for some $s \leq k$ with $S \subset {\rm bd} (\PP)$ such that each side of $\PP$ is parallel to another side of $\PP$ with both of them containing the same number of points from $S$.

\item{\bf (ii)} The point set $X$ is a $k$-antipodal point set in $\Rd$ if and only if there exists $\K_{\oo}\in {\cal{K}}_{\oo}^{d}$ such that $X$ is a $k$-diametral point set in $\Md_{\K_{\oo}}$ implying that $G_k(d)=\max_{\K_{\oo}\in {\cal{K}}_{\oo}^{d}} g_k(\Md_{\K_{\oo}})$,
where $d\geq 2$ and $k\geq 2$. Moreover, for all $d\geq 3$ and $k\geq 3$ one has
\begin{equation}\label{Main-2}
k\cdot 2^{d-1}\leq G_k(d) \leq (k-1)2^d-1 .
\end{equation}
\item{\bf (iii)} If $\mathbf{C}_{\oo}$ is an $\oo$-symmetric affine $d$-cube of $\Rd$, then $g_k(\Md_{\mathbf{C}_{\oo}})=k\cdot 2^{d-1}$ holds for all $d\geq 2$ and $k\geq 2$.
\item{\bf (iv)} For any $k$-diametral point set $S \subset \Ee^2$, we have ${\rm card}(S) \leq 2k-1$, with equality if and only if $S$ is the vertex set of a regular $(2k-1)$-gon. Thus, $g_k(\Ee^2)=2k-1$ for all $k\geq 2$.
\item{\bf (v)} $2k\leq g_k(\Ee^3)\leq 3k-2$ holds for all $k\geq 4$. Furthermore, $g_3(\Ee^3)=6$, and if $S \subset \Ee^3$ is a $3$-diametral point set with ${\rm card}(S)=6$, then the diameter graph of $S$ is isomorphic to one of (1-a), ..., (2-e) in Figure~\ref{fig:graphs}.
\end{theorem}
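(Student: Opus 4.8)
The unifying device throughout is the \emph{diameter graph} $D(S)$ (resp.\ \emph{antipodal graph}) on vertex set $S$, whose edges join the diametral (resp.\ antipodal) pairs: the defining property of a $k$-diametral (resp.\ $k$-antipodal) point \emph{set} is exactly that its independence number satisfies $\alpha(D(S))\le k-1$. I would first dispatch Part {\bf (ii)}: the asserted equivalence is just the restriction of Theorem~\ref{basic-estimates-Part2} to configurations of pairwise distinct points, since the body $\K_{\oo}$ produced there depends only on the positions of the points and distinctness is preserved in both directions; this gives $G_k(d)=\max_{\K_{\oo}}g_k(\Md_{\K_{\oo}})$ at once. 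For the upper bound I would argue that if some $g_k(\Md_{\K_{\oo}})$ reached $F_k(d)=(k-1)2^d$, then by the equality clause of Theorem~\ref{basic-estimates-Part2} the extremal configuration would be the vertex set of an affine cube \emph{with each vertex repeated $k-1\ge 2$ times}, hence not a point set; so $G_k(d)\le(k-1)2^d-1$. The lower bound $k\cdot 2^{d-1}\le G_k(d)$ is precisely Part {\bf (iii)}.

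For Part {\bf (iii)} I would normalise $\mathbf{C}_{\oo}=[-1,1]^d$, so the norm is $\ell_\infty$, the diameter is $2$, and after translation $S\subseteq[-1,1]^d$; two points are then diametral iff they sit on opposite facets in some coordinate (one $i$-th coordinate $+1$, the other $-1$). The lower bound comes from the explicit set formed by the $2^{d-1}$ edges of the cube parallel to the $x_d$-axis, placing on each edge its two endpoints together with $k-2$ distinct interior points. A direct check shows that any two points on \emph{different} edges are diametral while within a single edge only the two endpoints are, so $\alpha(D)=k-1$ and $S$ is a $k$-diametral point set of $k\cdot 2^{d-1}$ distinct points. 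The matching upper bound is the technical heart of (iii): I would prove it by the Danzer--Gr\"unbaum volumetric/tiling method adapted to point sets, sorting points by which facets they meet and inducting on $d$ to charge at most $k$ distinct points to each of $2^{d-1}$ classes; extracting here the factor $\tfrac12$ gain over $f_k=(k-1)2^d$ is, I expect, one of the two main obstacles of the theorem.

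Parts {\bf (i)} and {\bf (iv)} are planar. For (i) I would pass to $\PP=\conv(S)$ and use that two boundary points are antipodal iff they lie on parallel supporting lines; tracking, as the supporting direction rotates through $\pi$, how the two opposite contact features sweep $\bd(\PP)$ shows the antipodal graph forces $\alpha\ge\tfrac{1}{2}\,{\rm card}(S)$, giving ${\rm card}(S)\le 2k$, and that equality pins down the $(2s)$-gon with the stated pairing of parallel sides carrying equally many points. For (iv) I would invoke the classical theory of Euclidean planar diameter graphs (Hopf--Pannwitz: at most $n$ edges, any two diameters meeting), whence $D(S)$ has at most one cycle and it is odd, so $\alpha(D(S))\ge(n-1)/2$ and ${\rm card}(S)\le 2k-1$; equality forces $D(S)=C_{2k-1}$. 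I expect the rigidity step—upgrading $D(S)=C_{2k-1}$ to a regular $(2k-1)$-gon—to be the main obstacle, since the cyclic diameter structure alone only makes $S$ an equilateral star-polygon and does not transparently determine its metric shape, so finer properties of Euclidean diameters (e.g.\ via rotating calipers on the attained diameters) must be brought to bear.

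Finally, Part {\bf (v)} treats $\Ee^3$. The lower bound $2k\le g_k(\Ee^3)$ follows from erecting over the centre of a planar regular $(2k-1)$-gon an apex at the unique height making its distance to every vertex equal the (unchanged) diameter, producing $2k$ distinct points with $\alpha(D)=k-1$. For the upper bound $g_k(\Ee^3)\le 3k-2$ I would use the known structure of three-dimensional diameter graphs (at most $2n-2$ edges, with the constrained ball-polytope geometry of Grünbaum--Heppes--Straszewicz) to show $\alpha(D)\ge(n-1)/3$. The sharp value $g_3(\Ee^3)=6$ and the identification of the admissible diameter graphs then demand a finite but laborious case analysis: enumerate all graphs on six vertices with $\alpha\le 2$ and at most $2\cdot 6-2=10$ edges, and discard those not realisable as $\Ee^3$ diameter graphs, arriving at the list (1-a)--(2-e) of Figure~\ref{fig:graphs}. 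I anticipate this enumeration, together with the volumetric bound of (iii) and the planar rigidity of (iv), to be the most demanding parts of the whole proof.
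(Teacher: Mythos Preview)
Your overall architecture is sound, and Part \textbf{(ii)} together with all the lower bounds are handled correctly; in particular your edge-construction for \textbf{(iii)}---placing $k$ points on each of the $2^{d-1}$ edges of the cube parallel to one axis---is a valid alternative to the paper's iterated-prism construction coming from Part \textbf{(i)}.

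There is, however, a genuine gap in your plan for the upper bound $g_k(\Ee^3)\le 3k-2$ in Part \textbf{(v)}. The edge bound $e(D)\le 2n-2$ is by itself too weak to force $\alpha(D)\ge(n-1)/3$: already for $\alpha=2$ (i.e.\ $k=3$) a Tur\'an count on the complement only gives $n\le 9$, not $n\le 7$, and the gap widens with $k$. The ingredient you are missing is Dol'nikov's theorem that in a diameter graph in $\Ee^3$ \emph{any two odd cycles share a vertex}. The paper takes a shortest odd cycle $C$ of length $2t-1$, so that $G\setminus C$ is bipartite with parts of size at most $k-1$, and then runs a weighted double count over the ``wedges'' of outside vertices attached to $C$ to obtain $(t-1)(2t-1+t')\le(2t-1)(k-1)$; optimising over $t$ yields $3k-2$. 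Your classification strategy for $g_3(\Ee^3)=6$ (enumerate $6$-vertex graphs with $\alpha\le 2$ and $e\le 10$ and test realisability) is workable but heavier than the paper's route, which again uses Dol'nikov together with one forbidden-subgraph lemma (the edge graph of a square pyramid is not a diameter graph in $\Ee^3$) to reduce the case analysis to whether or not $K_4$ appears.

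For the upper bound in Part \textbf{(iii)} the paper uses neither volumetrics nor induction but a short double count, and you should substitute it for your vague plan. After normalising so that $S\subset\bd(\mathbf C)$, for each vertex $\p$ of $\mathbf C$ let $F_\p$ be the union of the facets \emph{not} containing $\p$; then $|S\setminus F_\p|\le k-1$ by $k$-diametrality, while every point of $S\setminus V(\mathbf C)$ lies in $\bd(\mathbf C)\setminus F_\p$ for at least two vertices $\p$. Summing over $\p\in V(\mathbf C)$ gives $v+2w\le(k-1)2^d$ (with $v=|S\cap V(\mathbf C)|$, $w=|S|-v$), and combining with $v\le 2^d$ yields $|S|\le k\cdot 2^{d-1}$.

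Your approaches to \textbf{(i)} and \textbf{(iv)} differ from the paper's (a rotating-calipers sweep versus induction on $k$ for \textbf{(i)}; a direct independence bound from Hopf--Pannwitz versus a Reuleaux-polygon reduction for \textbf{(iv)}); both outlines are plausible, and you have correctly flagged the rigidity step in \textbf{(iv)} as the delicate point.
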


\begin{figure}[ht]
\begin{center}
\includegraphics[width=0.7\textwidth]{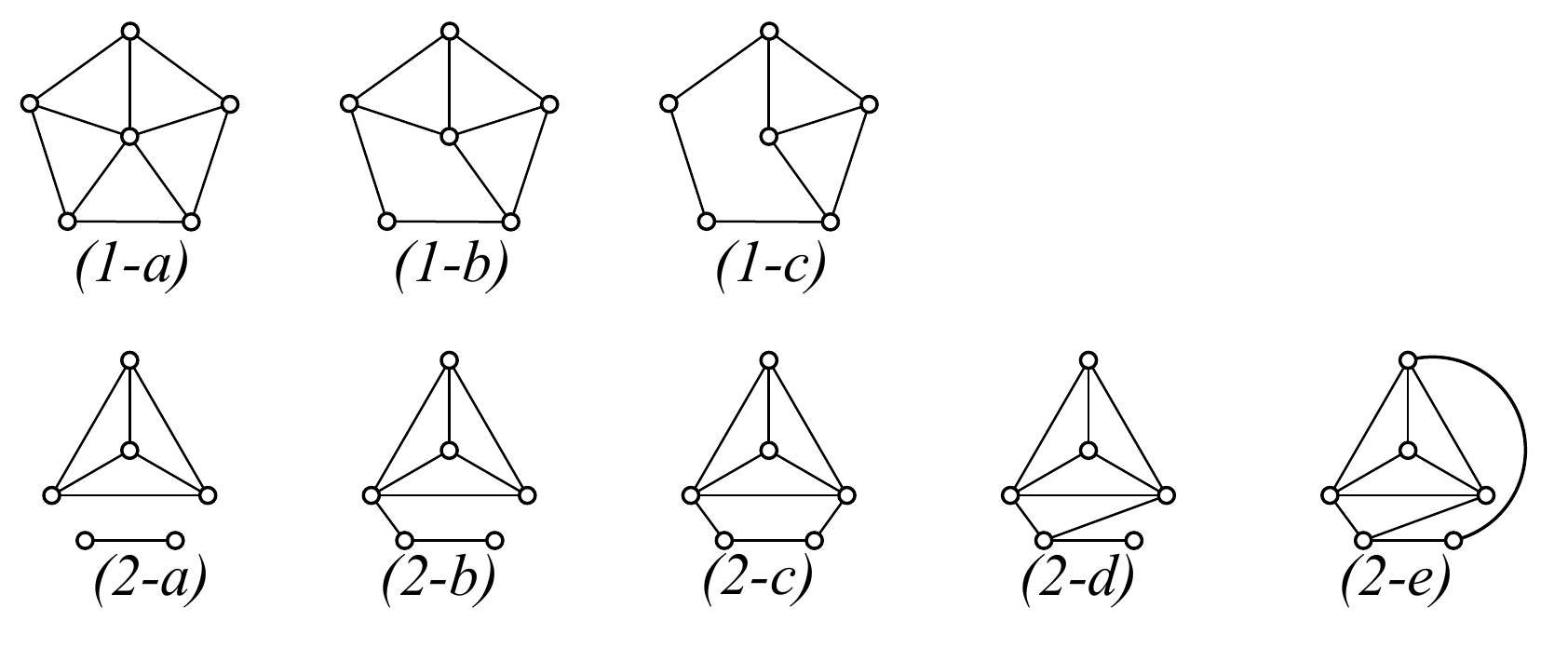}
\caption{The diameter graphs of $3$-diametral point sets in $\Ee^3$ of maximal cardinality.}
\label{fig:graphs}
\end{center}
\end{figure}

\begin{remark}\label{Polyanskii-conjecture}
We note that Conjecture 16 of \cite{Po}) states that $g_3(\Ed)\leq\big\lfloor\frac{3(d+1)}{2}\big\rfloor$. Part {\bf (iv)} (resp., Part {\bf (v)}) of Theorem \ref{Euclidean-k-diametral-point-sets} disproves (resp., proves) this conjecture for $d=2$ (resp., $d=3$).
\end{remark}

Part {\bf (ii)} of Theorem \ref{Euclidean-k-diametral-point-sets} supports
\begin{problem}\label{max-k-antipodal-point-sets}
Prove or disprove that $G_k(d)= k\cdot 2^{d-1}$ holds for all $d\geq 3$ and $k\geq 3$.
\end{problem}

In the rest of the paper we prove the theorems stated. In the proofs convexity methods are combined with volumetric estimates and combinatorial properties of diameter graphs.

\section{Proof of Proposition \ref{basic-estimates-Part1}}

{\bf Proof of Part (i)}. As any equilateral point configuration of $f_2(\Md_{\K_{\oo}})$ points consists of distinct points in $\Md_{\K_{\oo}}$, therefore by assigning multiplicity $k-1$ to each of those points one obtains a $k$-diametral point configuration in $\Md_{\K_{\oo}}$. Thus,
$(k-1)f_2(\Md_{\K_{\oo}})\leq f_k(\Md_{\K_{\oo}})$ holds for all $d\geq 2$, $k\geq 2$, and $\K_{\oo}\in {\cal{K}}_{\oo}^{d}$. Next, let $X:=\{\x_1,\x_2, \dots , \x_{f_k(\Md_{\K_{\oo}})}\}\subset\Rd$ be a $k$-diametral point configuration of $f_k(\Md_{\K_{\oo}})$ points in $\Md_{\K_{\oo}}$. One can think of $X$ as a point set of $\Rd$ consisting of some number of distinct points with each point having some multiplicity and then partition that underlying point set into $b(\Md_{\K_{\oo}})$ subsets say, $X_1, X_2, \dots , X_{b(\Md_{\K_{\oo}})}$ such that ${\rm diam}_{\Md_{\K_{\oo}}}(X_i)<{\rm diam}_{\Md_{\K_{\oo}}}(X)$ for all $1\leq i\leq b(\Md_{\K_{\oo}})$. As $X:=\{\x_1,\x_2, \dots , \x_{f_k(\Md_{\K_{\oo}})}\}\subset\Rd$ is a $k$-diametral point configuration in $\Md_{\K_{\oo}}$ therefore ${\rm card} \left(\{ j\ |\ \x_j\in X_i\}\right)\leq k-1$ holds for all $1\leq i\leq b(\Md_{\K_{\oo}})$, where ${\rm card}(\cdot)$ denotes the cardinality of the corresponding set. Thus, $f_k(\Md_{\K_{\oo}})=\sum_{i=1}^{b(\Md_{\K_{\oo}})} {\rm card} \left(\{ j\ |\ \x_j\in X_i\}\right)\leq (k-1) b(\Md_{\K_{\oo}})$ holds for all $d\geq 2$, $k\geq 2$, and $\K_{\oo}\in {\cal{K}}_{\oo}^{d}$. Finally, based on the results of \cite{Ro} (\cite{RoZo}),  \cite{Las}, and \cite{HSTV}, which have been discussed in the Introduction, we are left to show that $b(\Md_{\K_{\oo}})\leq h(\Rd)$ holds for all $d\geq 2$ and $\K_{\oo}\in {\cal{K}}_{\oo}^{d}$. This follows from the simple observation that if $\emptyset\neq Y\subset \Rd$ is a finite set and $0<\lambda<1$, $\y\in \Rd$, then ${\rm diam}_{\Md_{\K_{\oo}}}(\y+\lambda{\rm conv}(Y))<{\rm diam}_{\Md_{\K_{\oo}}}\left({\rm conv}(Y)\right)={\rm diam}_{\Md_{\K_{\oo}}}(Y)$ for all $d\geq 2$ and $\K_{\oo}\in {\cal{K}}_{\oo}^{d}$.

\noindent{\bf Proof of Part (ii)}. Clearly, the inequalities $f_2(\Mm^2_{\K_{\oo}})\leq \frac{1}{k-1}f_k(\Mm^2_{\K_{\oo}})\leq b(\Mm^2_{\K_{\oo}})$ (see Part {\bf (i)} of Proposition \ref{basic-estimates-Part1}) combined with the following claim complete the proof of Part {\bf (ii)}.

\begin{sublemma}\label{planar}
$f_2(\Mm^2_{\K_{\oo}})=b(\Mm^2_{\K_{\oo}})$ holds for all $\K_{\oo}\in {\cal{K}}_{\oo}^{2}$.
\end{sublemma}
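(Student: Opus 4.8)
The plan is to establish the two inequalities $f_2(\Mm^2_{\K_{\oo}})\leq b(\Mm^2_{\K_{\oo}})$ and $b(\Mm^2_{\K_{\oo}})\leq f_2(\Mm^2_{\K_{\oo}})$ separately. The first inequality is already available from Part \textbf{(i)} of Proposition~\ref{basic-estimates-Part1} (take $k=2$), so the real content is the reverse inequality $b(\Mm^2_{\K_{\oo}})\leq f_2(\Mm^2_{\K_{\oo}})$. To prove this I would exhibit a finite set $Y$ in the plane, of diameter $D>0$, that cannot be partitioned into fewer than $f_2(\Mm^2_{\K_{\oo}})$ pieces of smaller diameter; this forces $b(\Mm^2_{\K_{\oo}})\geq f_2(\Mm^2_{\K_{\oo}})$.

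First I would take a maximal equilateral point configuration $X=\{\x_1,\dots,\x_m\}$ in $\Mm^2_{\K_{\oo}}$, where $m=f_2(\Mm^2_{\K_{\oo}})$; by definition every pair of its (distinct) points is at distance exactly $D={\rm diam}_{\Md_{\K_{\oo}}}(X)$. The key observation is that in such an equilateral set, every point $\x_i$ realizes the diameter with \emph{every} other point, so each point is an endpoint of a diameter. Now suppose we partition the underlying point set of $X$ into pieces $X_1,\dots,X_b$ each of diameter strictly less than $D$, where $b=b(\Mm^2_{\K_{\oo}})$. Since any two points of $X$ are at distance exactly $D$, no piece $X_i$ can contain two distinct points of $X$; hence each piece contains at most one point of $X$, which gives $m\leq b$, i.e.\ $f_2(\Mm^2_{\K_{\oo}})\leq b(\Mm^2_{\K_{\oo}})$. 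This is in fact the easy direction and holds in every dimension; the genuinely planar content is the opposite inequality, which I reconsider below.

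The substantive step is therefore to show $b(\Mm^2_{\K_{\oo}})\leq f_2(\Mm^2_{\K_{\oo}})$, i.e.\ that any finite planar set $Y$ of diameter $D$ can be split into at most $f_2(\Mm^2_{\K_{\oo}})$ parts of smaller diameter. Here I would invoke the two-dimensional theory of diametral chords and the diameter graph of $Y$ in the norm $\|\cdot\|_{\K_{\oo}}$. The plan is to pass to the set $D(Y)$ of diametral pairs and use the planar fact that the diameter graph of any finite set in a normed plane is a disjoint union of paths and at most a controlled number of odd cycles (a normed-plane analogue of the classical result that in $\Ee^2$ the diameter graph has at most $n$ edges and its cycles are odd). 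Using this structure, I would cover $\conv(Y)$ by pieces indexed by the ``directions'' of diametral chords, each piece being a suitable cap or wedge of $\conv(Y)$ whose $\K_{\oo}$-diameter is strictly smaller than $D$; the number of such pieces needed is controlled by the number of distinct diametral directions, which in turn is bounded in terms of the largest equilateral set. The delicate point, and the main obstacle I anticipate, is handling the case of an odd diametral cycle of length $2t+1$: such a cycle forces an equilateral-type rigidity and must be shown to contribute exactly the right number of cover pieces so that the total never exceeds $f_2(\Mm^2_{\K_{\oo}})$, rather than the naive bound $f_2+1$.

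To finish, I would combine the two inequalities. The harder direction reduces to the combinatorial claim that the pieces produced from the diameter-graph decomposition can always be realized as at most $f_2(\Mm^2_{\K_{\oo}})$ genuinely smaller-diameter sets; the extremal configurations (regular-like equilateral sets and affine hexagons/parallelograms in the norm $\K_{\oo}$) are where the two quantities meet, and verifying that no configuration forces one extra piece is where the planar topology of diametral chords does the work. The expected main obstacle is precisely this tight accounting in the presence of odd diametral cycles, since that is the only place a partition into smaller pieces might a priori require more than $f_2(\Mm^2_{\K_{\oo}})$ parts.
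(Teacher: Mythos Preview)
Your proposal is a plan rather than a proof, and the plan has a real gap in the hard direction $b(\Mm^2_{\K_{\oo}})\le f_2(\Mm^2_{\K_{\oo}})$. The structural claim you invoke---that the diameter graph of a finite set in any normed plane is ``a disjoint union of paths and at most a controlled number of odd cycles''---is simply false when $\K_{\oo}$ is a parallelogram. In that case the diameter graph can be $K_4$ (the four vertices of a homothet of $\K_{\oo}$ are mutually equidistant) or even $K_{n,n}$ (place $n$ points on each of two opposite sides of such a homothet), so no path/cycle decomposition of the kind you describe exists and the covering argument built on it cannot get started. Even in the non-parallelogram case you never actually carry out the ``tight accounting in the presence of odd diametral cycles'' that you yourself flag as the main obstacle; what remains is essentially a proposal to re-prove the planar Borsuk theorem for Minkowski planes from scratch.

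By contrast, the paper's proof is a two-line citation of known values. Boltyanskii--Soltan \cite{BoSo} and Gr\"unbaum \cite{Gru} give $b(\Mm^2_{\K_{\oo}})=3$ if $\K_{\oo}$ is not a parallelogram and $b(\Mm^2_{\K_{\oo}})=4$ if it is; Petty \cite{Pe} gives the identical dichotomy for $f_2(\Mm^2_{\K_{\oo}})$. Comparing the two lists yields the sublemma immediately. If you insist on a self-contained argument for $b\le f_2$, you are effectively committing to reproduce the content of \cite{BoSo} and \cite{Gru}, and you will in any event have to treat the parallelogram case by a separate method from the one you sketched.
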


\begin{proof}
On the one hand, recall that according to \cite{BoSo} and \cite{Gru} $b(\Mm^2_{\K_{\oo}})=3$ if $\K_{\oo}\in {\cal{K}}_{\oo}^{2}$ is different from a parallelogram and  $b(\Mm^2_{\K_{\oo}})=4$ if $\K_{\oo}$ is a parallelogram. On the other hand, Petty \cite{Pe}
proved that $f_2(\Mm^2_{\K_{\oo}})=3$ if $\K_{\oo}\in {\cal{K}}_{\oo}^{2}$ is different from a parallelogram and  $f_2(\Mm^2_{\K_{\oo}})=4$ if $\K_{\oo}$ is a parallelogram. Thus, Sublemma \ref{planar} follows.
\end{proof}

\noindent{\bf Proof of Part (iii)}. Let us start by recalling that $b(\Ee^2)=3$ (\cite{Bo}) and $b(\Ee^3)=4$ (\cite{Eg}, \cite{Grun}, and \cite{He}). Combining these facts with $f_2(\Ed)\leq \frac{1}{k-1}f_k(\Ed)\leq b(\Ed)$ (see Part {\bf (i)} of Proposition \ref{basic-estimates-Part1}) and the elementary observations that $f_2(\Ee^2)=3$ and $f_2(\Ee^3)=4$, one obtains that $f_k(\Ed)=(k-1)(d+1)$ holds for all $k\geq 2$ and $d=2,3$. In order to prove (\ref{Euclidean-bounds-1}), we observe that the inequalities $(k-1)(d+1)\leq f_k(\Ed)\leq (k-1)b(\Ed)$ follow from Part {\bf (i)} of Proposition \ref{basic-estimates-Part1} and the elementary observation that $f_2(\Ed)=d+1$. Finally, (\ref{Euclidean-bounds-2}) follows from (\ref{Euclidean-bounds-1}) and the results of Lassak \cite{La} and Schramm \cite{Sch} in a straightforward way. This completes the proof of Proposition \ref{basic-estimates-Part1}.

\section{Proof of Theorem \ref{basic-estimates-Part2}}

Let $X:=\{\x_1,\x_2, \dots , \x_n\}\subset\Rd$ be a $k$-diametral point configuration of $n$ points in $\Md_{\K_{\oo}}$ with diameter $t:={\rm diam}_{\Md_{\K_{\oo}}}(X)>0$. In what follows it will be convenient to use the notation $B^d_{\K_{\oo}}
[\y , r]$ for the (closed) ball of center $\y$ and radius $r>0$ in $\Md_{\K_{\oo}}$. We claim that $X$ is a $k$-antipodal point configuration of $n$ points in $\Rd$. In order to prove this claim, let $\x_{n_1}, \x_{n_2}, \dots , \x_{n_k}$, 
$1\leq n_1< n_2 < \dots  < n_k\leq n$ be a $k$-tuple chosen from $X$. By assumption there are two points say, $\x_{n_i}$ and $\x_{n_j}$, $1\leq i<j\leq k$ such that $\| \x_{n_i}- \x_{n_j}\|_{\K_{\oo}} =t$. Thus, 
\begin{equation}\label{equivalence-1}
{\rm conv}(X)\subseteq B^d_{\K_{\oo}}[\x_{n_i} , t] \cap B^d_{\K_{\oo}}[\x_{n_j} , t] \ {\rm with}\ x_{n_j}\in {\rm bd}(B^d_{\K_{\oo}}[\x_{n_i} , t]) \ {\rm and}\ x_{n_i}\in {\rm bd}(B^d_{\K_{\oo}}[\x_{n_j} , t]).
\end{equation}
Clearly, (\ref{equivalence-1}) implies the existence of a supporting hyperplane $H_i$ (resp., $H_j$) of $B^d_{\K_{\oo}}[\x_{n_j} , t]$ (resp., $B^d_{\K_{\oo}}[\x_{n_i} , t]$) with $\x_{n_i}\in H_i$ (resp., $\x_{n_j}\in H_j$) such that $H_i$ and $H_j$ are two distinct parallel hyperplanes between which ${\rm conv}(X)$ lies in $\Rd$. Hence, $X$ is a $k$-antipodal point configuration of $n$ points in $\Rd$. Next, we need to prove the converse of this statement. So, let $X:=\{\x_1,\x_2, \dots , \x_n\}\subset\Rd$ be a $k$-antipodal point configuration of $n$ points in $\Rd$. Without loss of generality we may assume that $\mathbf{P}_{\oo}:={\rm conv}(X)-{\rm conv}(X)\in {\cal{K}}_{\oo}^{d}$. We claim that $X:=\{\x_1,\x_2, \dots , \x_n\}\subset\Rd$ is a $k$-diametral point configuration of $n$ points in $\Md_{\mathbf{P}_{\oo}}$. In order to see this let $\x_{n_1}, \x_{n_2}, \dots , \x_{n_k}$, $1\leq n_1< n_2 < \dots  < n_k\leq n$ be a $k$-tuple chosen from $X$. By assumption there exist two points say, $\x_{n_i}$ and $\x_{n_j}$, $1\leq i<j\leq k$ lying on distinct parallel supporting hyperplanes of the convex hull ${\rm conv} (X)$ of $X$ in $\Rd$. Thus,
\begin{equation}\label{equivalence-2}
{\rm conv}(X)\subseteq B^d_{\mathbf{P}_{\oo}}[\x_{n_i} , 1] \cap B^d_{\mathbf{P}_{\oo}}[\x_{n_j} , 1] \ {\rm with}\ x_{n_j}\in {\rm bd}(B^d_{\mathbf{P}_{\oo}}[\x_{n_i} , 1]) \ {\rm and}\ x_{n_i}\in {\rm bd}(B^d_{\mathbf{P}_{\oo}}[\x_{n_j} , 1]).
\end{equation}
Clearly, (\ref{equivalence-2}) implies that $X:=\{\x_1,\x_2, \dots , \x_n\}\subset\Rd$ is a $k$-diametral point configuration of $n$ points in $\Md_{\mathbf{P}_{\oo}}$ with diameter ${\rm diam}_{\Md_{\mathbf{P}_{\oo}}}(X)=1$. This completes the proof of the first statement in Theorem \ref{basic-estimates-Part2} implying also that
\begin{equation}\label{Main-1-part1}
F_k(d)=\max_{\K_{\oo}\in {\cal{K}}_{\oo}^{d}} f_k(\Md_{\K_{\oo}})
\end{equation}
holds for all $d\geq 2$ and $k\geq 2$. Hence, in order to complete the proof of (\ref{Main-1}) we need to show that
\begin{equation}\label{Main-1-part2}
F_k(d)=(k-1)2^d
\end{equation}
holds for all $d\geq 2$ and $k\geq 2$. By taking the vertices of an affine $d$-cube each with multiplicity $k-1$ in $\Rd$, one obtains that $(k-1)2^d\leq F_k(d)$. Thus, we are left to show that
\begin{equation}\label{Main-1-part3}
F_k(d)\leq (k-1)2^d
\end{equation}
holds for all $d\geq 2$ and $k\geq 2$. The following proof of (\ref{Main-1-part3}) is a natural extension of the volumetric method of \cite{DaGr}. The details are as follows. Let $X:=\{\x_1,\x_2, \dots , \x_n\}\subset\Rd$ be a $k$-antipodal point configuration of $n$ points in $\Rd$.
Without loss of generality we may assume that ${\rm int}(\mathbf{P})\neq\emptyset$, where $\mathbf{P}:={\rm conv}(X)\subset\Rd$. So, if ${\rm vol}_d(\cdot)$ denotes the $d$-dimensional Lebesgue measure in $\Rd$, then by assumption ${\rm vol}_d(\mathbf{P})>0$. Let
$\mathbf{P}_l:=\x_l+\frac{1}{2}\left(\mathbf{P}-\x_l\right)$ for $1\leq l\leq n$. Clearly, $\mathbf{P}_l\subset\mathbf{P}$ for all $1\leq l\leq n$. We claim that the convex $d$-polytopes $\mathbf{P}_1, \mathbf{P}_2, \dots , \mathbf{P}_n$ form a $(k-1)$-fold packing in the convex $d$-polytope $\mathbf{P}$, i.e., for any $k$-tuple
$\mathbf{P}_{n_1}, \mathbf{P}_{n_2}, \dots , \mathbf{P}_{n_k}$, $1\leq n_1< n_2 < \dots  < n_k\leq n$ one has
\begin{equation}\label{packing}
{\rm int}\left(\mathbf{P}_{n_1}\right)\cap {\rm int}\left(\mathbf{P}_{n_2}\right)\cap \dots \cap{\rm int}\left(\mathbf{P}_{n_k}\right)=\emptyset.
\end{equation} 
Namely, by assumption the $k$-tuple $\x_{n_1}, \x_{n_2}, \dots , \x_{n_k}$, $1\leq n_1< n_2 < \dots  < n_k\leq n$ chosen from $X$ must contain two points say, $\x_{n_i}$ and $\x_{n_j}$, $1\leq i<j\leq k$ lying on distinct parallel supporting hyperplanes of $\mathbf{P}$ in $\Rd$, i.e., there must exist a $(d-1)$-dimensional linear subspace $H\subset \Rd$ such that $\x_{n_i}+H$ and $\x_{n_j}+H$, $1\leq i<j\leq k$ are distinct parallel supporting hyperplanes of $\mathbf{P}$. This implies in a straightforward way that $\frac{1}{2}(\x_{n_i}+\x_{n_j})+H$ separates $\mathbf{P}_{n_i}$ and $\mathbf{P}_{n_j}$, finishing the proof of (\ref{packing}). Finally, as $\mathbf{P}_1, \mathbf{P}_2, \dots , \mathbf{P}_n$ form a $(k-1)$-fold packing in $\mathbf{P}$ therefore
\begin{equation}\label{Main -1-part4}
\frac{n}{2^d}{\rm vol}_d(\mathbf{P})=\sum_{l=1}^n{\rm vol}_d(\mathbf{P}_l)\leq (k-1){\rm vol}_d(\mathbf{P}),
\end{equation}
from which (\ref{Main-1-part3}) follows, completing the proof of (\ref{Main-1-part2}).

We are left to characterize the case of equality in (\ref{Main-1-part3}). The above proof of (\ref{Main-1-part3}) shows that equality in (\ref{Main-1-part3}) implies the existence of a convex $d$-polytope $\mathbf{P}:={\rm conv}(X)\subset\Rd$ and a point configuration $X:=\{\x_1,\x_2, \dots , \x_N\}$ of $N=(k-1)2^d$ points in $\Rd$ such that the convex d-polytopes $\mathbf{P}_l:=\x_l+\frac{1}{2}\left(\mathbf{P}-\x_l\right)$, $1\leq l\leq N$, which are translates of each other and are half-size homothetic copies of $\mathbf{P}$ lying in $\mathbf{P}$, form a {\it $(k-1)$-tiling} in $\mathbf{P}$, i.e., they possess the property given in the following definition.

\begin{definition}\label{multiple-tiling}
We say that the convex $d$-polytopes $\mathbf{P}_l\subset \Rd$, $1\leq l\leq N$ form a {\rm $(k-1)$-tiling} of the convex $d$-polytope $\mathbf{P}\subset\Rd$ if $\mathbf{P}_l\subset\mathbf{P}$ holds for all $1\leq l\leq N$ and every point of $\mathbf{P}$ which is not a boundary point of any  $\mathbf{P}_l, 1\leq l\leq N$ belongs to the interior of exactly $k-1$ convex $d$-polytopes chosen from $\mathbf{P}_l, 1\leq l\leq N$. 
\end{definition}

We need to prove

\begin{lemma}\label{characterization}
Let $\mathbf{P}:={\rm conv}(X)\subset\Rd$, $d\geq 1$ be a convex $d$-polytope and $X:=\{\x_1,\x_2, \dots , \x_N\}$ be a point configuration of $N=(k-1)2^d$ points in $\Rd$ with $k\geq 2$ such that the convex d-polytopes $\mathbf{P}_l:=\x_l+\frac{1}{2}\left(\mathbf{P}-\x_l\right)$, $1\leq l\leq N$, form a $(k-1)$-tiling in $\mathbf{P}$. Then $\mathbf{P}$ is an affine $d$-cube and $X$ is its vertex set with each vertex having multiplicity $k-1$.
\end{lemma}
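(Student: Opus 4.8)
The plan is to prove Lemma \ref{characterization} by induction on the dimension $d$, combining a local analysis at the vertices of $\mathbf{P}$ with a trace argument on its facets; this is the natural way to carry the Danzer--Gr\"unbaum/Groemer volumetric method from a simple tiling to a $(k-1)$-tiling. The base case $d=1$ is immediate: a $(k-1)$-tiling of a segment by $2(k-1)$ half-length translates forces the centres $\x_l$ to accumulate only at the two endpoints (a point just inside an endpoint is covered solely by translates sharing that endpoint), so each endpoint receives multiplicity exactly $k-1$, and the segment is the $1$-dimensional affine cube.

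The first ingredient is a vertex analysis. Let $\mathbf{v}$ be a vertex of $\mathbf{P}$ and take a supporting hyperplane meeting $\mathbf{P}$ only at $\mathbf{v}$; writing $\mathbf{v}=\frac12(\x_l+\q)$ with $\q\in\mathbf{P}$ and evaluating the supporting functional shows $\mathbf{v}\in\mathbf{P}_l$ if and only if $\x_l=\mathbf{v}$. All such copies coincide with $C_{\mathbf{v}}:=\mathbf{v}+\frac12(\mathbf{P}-\mathbf{v})$, which fills a one-sided neighbourhood of $\mathbf{v}$ in $\mathbf{P}$ because its tangent cone at $\mathbf{v}$ equals that of $\mathbf{P}$. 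By closedness, no other copy reaches points of $\mathbf{P}$ sufficiently near $\mathbf{v}$, so the covering multiplicity just inside $\mathbf{v}$ equals the number of indices with $\x_l=\mathbf{v}$; since the $\mathbf{P}_l$ form a $(k-1)$-tiling, this number is exactly $k-1$. Hence every vertex of $\mathbf{P}$ has multiplicity exactly $k-1$ in $X$, and if $\mathbf{P}$ has $V$ vertices then $V(k-1)\le N=(k-1)2^d$, i.e. $V\le 2^d$. A parallel computation with the facet functionals shows that any copy with $\x_l\in{\rm int}\,\mathbf{P}$ in fact lies in ${\rm int}\,\mathbf{P}$.

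The second ingredient reduces one dimension via facets. Fix a facet $F$ of $\mathbf{P}$ with outer normal $\mathbf{u}$; the copies reaching the hyperplane of $F$ are exactly those with $\x_l\in F$, and their traces are the half-size translates $\frac12(\x_l+F)\subseteq F$. A limiting argument (a point just below a generic $\mathbf{y}\in{\rm relint}\,F$ is covered precisely by the copies whose trace contains $\mathbf{y}$) shows these traces form a $(k-1)$-tiling of $F$, and by volume there are exactly $(k-1)2^{d-1}$ of them. The inductive hypothesis applied to $F$ then gives that $F$ is an affine $(d-1)$-cube and that the centres lying on $F$ are exactly its vertices, each with multiplicity $k-1$. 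In particular every facet of $\mathbf{P}$ is a parallelotope, so every $2$-face of $\mathbf{P}$ is a parallelogram.

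To finish, one must pass from ``all facets are parallelotopes'' (plus $V\le 2^d$) to ``$\mathbf{P}$ is an affine $d$-cube''. For $d\ge 3$, central symmetry of all $2$-faces makes $\mathbf{P}$ a zonotope (McMullen's characterization); a full-dimensional zonotope has at least $2^d$ vertices, with equality only for parallelotopes, so $V\le 2^d$ forces $\mathbf{P}$ to be an affine $d$-cube. The case $d=2$ is handled directly, since there $V\le 4$ admits a triangle: the triangle is excluded because its three vertex-copies pack $\mathbf{P}$ and leave an inverted central region that contains no translate of $\frac12\mathbf{P}$, yet would have to be tiled by such translates; a quadrilateral is then forced to be a parallelogram (the planar $k=2$ tiling). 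Once $\mathbf{P}$ is an affine $d$-cube we have $V=2^d$, so the vertex copies already account for $V(k-1)=(k-1)2^d=N$ centres; hence there are no interior centres and $X$ is exactly the vertex set with every vertex of multiplicity $k-1$. I expect the genuine obstacle to be this last structural step: facet-cube data alone is not enough (the rhombic dodecahedron and planar triangles have parallelotope facets), and the decisive extra leverage is precisely the bound $V\le 2^d$ produced by the vertex analysis, with the planar triangle being the one configuration this bound misses and for which the separate inscription/orientation argument is needed.
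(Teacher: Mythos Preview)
Your proof is correct and follows a genuinely different route from the paper's. Both arguments proceed by induction on $d$ and both use the facet-trace reduction (the half-size copies with centres on a facet $F$ restrict to a $(k-1)$-tiling of $F$, so by induction $F$ is an affine $(d-1)$-cube with the centres on $F$ sitting at its vertices with multiplicity $k-1$). The divergence is in how one passes from ``every facet is a cube'' to ``$\mathbf{P}$ is a cube''.

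The paper argues \emph{locally}: from the $(k-1)$-tiling structure in a slab near $F$ it extracts that two copies whose traces on $F$ share a $(d-2)$-face must in fact share a full $(d-1)$-dimensional facet in $\mathbf{P}$; this forces the facets of $\mathbf{P}$ adjacent to $F$ along opposite $(d-2)$-faces of the cube $F$ to be parallel. Thus $\mathbf{P}$ coincides with an affine $d$-cube in a neighbourhood of each of its facets, and patching these local pictures finishes the proof.

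Your argument is \emph{global}: the extra ingredient is the vertex analysis showing that each vertex of $\mathbf{P}$ carries multiplicity exactly $k-1$ in $X$, whence $V\le 2^d$. For $d\ge 3$ the facet induction makes every $2$-face a parallelogram, McMullen's characterisation then makes $\mathbf{P}$ a zonotope, and the elementary fact that a full-dimensional zonotope has at least $2^d$ vertices (with equality only for parallelotopes) closes the case. This route is cleaner and sidesteps the somewhat delicate ``adjacent copies must share a facet'' step, at the price of importing McMullen's theorem and the zonotope vertex bound (the latter is the chamber count for an essential central hyperplane arrangement and is easy, but worth a sentence). Your separate treatment of $d=2$ is necessary since $V\le 4$ does not by itself exclude a triangle; the orientation-mismatch argument for the central inverted region is correct, and note that you implicitly use the edge reduction (the $d=1$ induction) to place the $k-1$ surplus centres in ${\rm int}(\mathbf{P})$ before invoking it.
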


\begin{proof}
We prove Lemma \ref{characterization} by induction on $d$. We leave the easy proof of Lemma \ref{characterization} for $d=1$ and for all $k\geq 2$ to the reader and assume that it holds in dimensions at most $d-1\geq 1$ and for all $k\geq 2$. Next, let $\mathbf{P}$ and $X$ be given with the properties assumed in Lemma \ref{characterization}. Let $F$ be an arbitrary facet, i.e., $(d-1)$-dimensional face of $\mathbf{P}$ and let $F_{\epsilon}:=\cup \{B^d(\x, \epsilon)| \x\in F\}$ be the $\epsilon$-neighbourhood of $F$ in $\Ed$, where $B^d(\x, \epsilon)$ denotes the open ball of radius $\epsilon$ centered at $\x$ in $\Ed$. Clearly, there exists $\epsilon_{0}>0$ such that $F_{\epsilon_{0}}\cap \mathbf{P}_l\neq\emptyset$ if and only if $F \cap \mathbf{P}_l\neq\emptyset$, where $1\leq l\leq N$. Let $I_F:=\{ l \ |\ F \cap \mathbf{P}_l\neq\emptyset ,1\leq l\leq N\}$ and $X_F:=\{\x_l\ |\ l\in I_F\}$. 

\begin{sublemma}\label{local-version}
There exists an affine $d$-cube $\mathbf{C}_F\subset\Rd$ such that $F$ is a facet of $\mathbf{C}_F$, $F_{\epsilon_{0}}\cap \mathbf{C}_F = F_{\epsilon_{0}}\cap \mathbf{P}$, and $X_F$ is the vertex set of $F$ with each vertex having multiplicity $k-1$.
\end{sublemma}
\begin{proof}
Clearly, $F={\rm conv} (X_{F})$. Furthermore, as the convex $d$-polytopes $\mathbf{P}_l$, $1\leq l\leq N$, form a $(k-1)$-tiling in $\mathbf{P}$ therefore the convex $(d-1)$-polytopes  $F\cap\mathbf{P}_l=\x_l+\frac{1}{2}\left(F-\x_l\right)$, $l\in I_{F}$ must form  a $(k-1)$-tiling in $F$. By induction it follows that $F$ is an affine $(d-1)$-cube and $X_{F}$ is its vertex set with each vertex having multiplicity $k-1$. Next, let $F'$ and $F''$ be an arbitrary pair of facets of $ \mathbf{P}$ having the property that $F'\cap F$ and $F''\cap F$ are distinct parallel $(d-2)$dimensional faces of $F$. Then choose $l',l''\in I_F$ such that $F\cap\mathbf{P}_{l'}=\x_{l'}+\frac{1}{2}\left(F-\x_{l'}\right)$ and $F\cap\mathbf{P}_{l''}=\x_{l''}+\frac{1}{2}\left(F-\x_{l''}\right)$ share a $(d-2)$-dimensional face in common which is parallel to $F'$ and $F''$. Using the $(k-1)$-tiling of $\mathbf{P}$ again, it follows that $\mathbf{P}_{l'}$ and $\mathbf{P}_{l''}$ must share a facet in common implying that $F'$ and $F''$ are parallel facets of $ \mathbf{P}$. The existence of $\mathbf{C}_F$ follows, finishing the proof of Sublemma \ref{local-version}.
\end{proof}
It is easy to see that Sublemma \ref{local-version} applied to the facets of $\mathbf{P}$ finishes the proof of Lemma \ref{characterization}.
\end{proof}
Finally, let $X:=\{\x_1,\x_2, \dots , \x_N\}$ be a $k$-diametral point configuration of $N=(k-1)2^d$ points in $\Md_{\K_{\oo}}$. By the first part of Theorem \ref{basic-estimates-Part2} it follows that $X$ is a $k$-antipodal point configuration of $N=(k-1)2^d$ points in $\Rd$. Then using Lemma \ref{characterization}, one obtains that ${\rm conv}(X)$ is an affine $d$-cube in $\Rd$ and $X$ is its vertex set with each vertex having multiplicity $k-1$. Thus, $\K_{\oo}$ must be an $\oo$-symmetric affine $d$-cube homothetic to ${\rm conv}(X)$ in $\Rd$, which completes the proof of Theorem \ref{basic-estimates-Part2}.

\section{Proof of Theorem \ref{Euclidean-k-diametral-point-sets}}

{\bf Proof of Part (i)}. We prove the statement by induction on $k$. For $k=2$ it is trivial, and hence, we assume that for some $k \geq 3$ we have $G_{k-1}(2)=2k-2$, and any $(k-1)$-antipodal point set of cardinality $2k-2$ satisfies the conditions of the theorem. Let $S$ be a $k$-antipodal point set of maximal cardinality in $\Rr^2$, and let $\PP := {\rm conv} (S)$.
Then, clearly, $\mathrm{card} (S) \geq 2k$, ${\rm int} (\PP)\neq\emptyset$, and $S \subset {\rm bd} (\PP)$, as $\p \in \inter (\PP)$ would imply that $S \setminus \{ \p \}$ is a $(k-1)$-antipodal point set of cardinality $2k-1 > 2k-2$.
Let the points of $S$ be $\p_1, \p_2,\ldots, \p_m$ in counterclockwise order in ${\rm bd}(\PP)$. Observe that any $\p_i$ is antipodal to some consecutive vertices of $\PP$. Furthermore, if $\p_i$ is antipodal to $\p_{j(i)}, \p_{j(i)+1}, \ldots, \p_{t(i)}$, then both $j(i)$ and $t(i)$ are increasing functions of $i$ with respect to the counterclockwise order.

Now, assume that some $\p_j$ is a relative interior point of a side of $\PP$; say, assume that for some value of $i \geq 3$, $\p_2, \p_3, \ldots, \p_{i-1}$ lie in the relative interior of the side $[\p_1,\p_i]$ of $\PP$. Assume that the supporting line of $\PP$ which is parallel to $[\p_1,\p_i]$ and is distinct from the line of $[\p_1,\p_i]$, contains the points $\p_s,\p_{s+1},\ldots, \p_{t}$ for some $i < s \leq t \leq m$. Note that after suitably relabeling the points we can achieve that $i-1 \geq t-s$, and $s-i \geq m+1-t$, implying $s \geq \frac{m}{2}+1$.
Since $S$ is $k$-antipodal therefore the points $\p_2,\p_3,\ldots,\p_{k+1}$ must contain an antipodal pair, which is equivalent to the inequality $k+1 \geq s$.
Hence, $k+1 \geq \frac{m}{2}+1$ and so, $m\leq 2k$. 
For later use we remark that if $m=2k$, then the $k$-antipodality of $S$ yields that everywhere in the above chain of inequalities we have equality. In particular, in this case we have $s = k+1$ and $i-1=t-s$, implying also that $t=k+i$.

To show that $G_k(2) = 2k$, we are left with the case that $S$ is the vertex set of $\PP$.
We show that some $\p_i$ is antipodal to at most three vertices of $\PP$. Indeed, if $\p_i$ is antipodal to $\p_{j},\p_{j+1},\p_{j+2}, \p_{j+3}$, then $\p_{j+1}$ and $\p_{j+2}$ are antipodal to at most two vertices of $\PP$.
Thus, without loss of generality, we may assume that $t(1) \leq j(1)+2$. Then, using a suitable labeling of the vertices, we may also assume that
$j(1) \geq \frac{m}{2}$ if $m$ is even, and $j(1) \geq \frac{m+1}{2}$ if $m$ is odd. Thus, since among $\p_1,\p_2,\ldots, \p_k$ there is an antipodal pair, we have $m  \leq 2k$ if $m$ is even (since otherwise $t(k) \leq m$), and $m \leq 2k-1$ if $m$ is odd (for a similar reason). In particular, the vertices of a regular $(2k)$-gon are $k$-antipodal. Furthermore, observe that if $t(1) \leq j(1)+1$, then in the same way it follows that $m \leq 2k-1$.

Finally, assume that ${\rm card}(S)=2k$ and $S$ is $k$-antipodal.
If $S$ is the vertex set of $\PP:= {\rm conv}(S)$, then ${\rm card}(S)=2k$ and the argument in the previous paragraph yields that every vertex is antipodal to exactly three vertices of $P$. In addition, it follows similarly that $j(i) = k+i$ for all values of $i$.
Thus, for all values of $i$ the lines through $\p_{i+k}$ and parallel to $[\p_{i-1},\p_i]$ or $[\p_i,\p_{i+1}]$ are supporting lines of $P$, and similarly, the lines through $\p_i$ and parallel to $[\p_{k+i-1},\p_{k+i}]$ or $[\p_{k+i},\p_{k+i+1}]$ are supporting lines of $P$. Hence, $[\p_i,\p_{i+1}]$ and $[\p_{i+k},\p_{i+k+1}]$ are parallel to all values of $i$, yielding that $P$ is a $(2k)$-gon with the property that its opposite pairs of sides are parallel.
Finally, consider the case that the relative interior of a side of $\PP$ contains a point of $S$ say, $\p_2, \p_3, \ldots, \p_{i-1}$ lie in the relative interior of the side $[\p_1,\p_i]$ of $\PP$. Using the notation and the argument in the first two paragraphs of our proof of Part \textbf{(i)}, we have that $\p_{k+1},\p_{k+2},\ldots,\p_{k+i}$ lie on the same side $[\p_{k+1},\p_{k+i}]$, and this side is parallel to $[\p_1,\p_{i}]$. On the other hand, removing all points $\p_2,\ldots,\p_{i-1}, \p_{k+2},\ldots,\p_{k+i-1}$, we obtain a $(k-i+2)$-antipodal set $S'$ of $2(k-i+2)$ points with $\PP = \mathrm{conv} (S')$. Then, by the induction hypothesis, $P$ is an even-sided polygon which contains the same number of points of $S'$ on any pair of opposite sides, implying that the points of $S$ satisfy the same property. {This completes the proof of Part {\bf (i)}.

{\bf Proof of Part (ii)}. The first claim and the upper estimate of (\ref{Main-2}) follow from the proof of Theorem \ref{basic-estimates-Part2} in a straightforward way. So, we are left to prove  the lower bound of (\ref{Main-2}). Let us start with the $(2s)$-gon $\PP:={\rm conv} (S)\subset \Ee^2$ for some $s \leq k$ and ${\rm card}(S)=2k$ having the property that $S \subset {\rm bd} (\PP)$ and each side of $\PP$ is parallel to another side of $\PP$ with both of them containing the same number of points from $S$. Then let $\Ee^2\subset\Ee^3$ be a plane through the origin of $\Ee^3$ and let $\PP':=\x +\PP$ for some $\x\in\Ee^3\setminus\Ee^2$. Clearly, ${\rm conv} (\PP\cup\PP')$ is a $1$-fold prism in $\Ee^3$ and Part {\bf (i)} implies that $S\cup (\x+S)$ is a $k$-antipodal point set of cardinality $2(2k)$ in $\Ee^3$ lying on the edges of ${\rm conv} (\PP\cup\PP')$. Repeating this process $(d-2)$-times one obtains  a $(d-2)$-fold prism in $\Ed$ such that its $1$-skeleton contains a $k$-antipodal point set of cardinality $2^{d-1}\cdot k$ in $\Ed$, finishing the proof of Part {\bf (ii)}.


{\bf Proof of Part (iii)}. Let $S$ be a $k$-diametral point set in $\Md_{\mathbf{C}_{\oo}}$. Then, without loss of generality one may assume that $S \subset {\rm bd}(\mathbf{C})$, where $\mathbf{C}$ denotes the smallest rectangular box containing $S$ and having facets parallel to the corresponding facets of $\mathbf{C}_{\oo}$. Let $V(\mathbf{C})$ be the vertex set of $\mathbf{C}$, and let $v$ denote the number of points in $S \cap V(\mathbf{C})$. Finally, for any $\p \in V(\mathbf{C})$, let $F_{\p}$ denote the union of the facets of $\mathbf{C}$ that do not contain $\p$, and set $w:=\mathrm{card} (S) - v$. Observe that by $k$-diametrality, we have that $\mathrm{card}(S \setminus F_{\p} ) \leq k-1$ for every vertex $\p \in V(\mathbf{C})$, and that if $\q \in S \setminus V(\mathbf{C})$, then $\q \in {\rm bd}(\mathbf{C}) \setminus F_{\p}$ for at least two distinct points $\p \in V(\mathbf{C})$. Thus,
\[
2\ {\rm card}(S)-v=v+2w \leq \sum_{\p \in V(\mathbf{C})} \mathrm{card} (S \setminus F_{\p}) \leq (k-1) V(\mathbf{C}) = (k-1)2^d.
\]
Combining this inequality with the trivial inequality $v \leq 2^d$, it readily follows that $\mathrm{card}(S) \leq k \cdot 2^{d-1}$. This together with the relevant construction of the proof of Part {\bf (ii)} completes the proof of Part {\bf (iii)}.

{\bf Proof of Part (iv)}. 
We prove the statement by induction on $k$. Clearly, $g_2(\Ee^2)=3$, and it is attained only for the vertex set of a regular triangle. Now, assume that the statement holds for any $l$-diametral point set in $\Ee^2$ for all $2\leq l\leq k-1$ and let $G$ be the diameter graph of the $k$-diametral point set $S \subset \Ee^2$. Here the vertex set of $G$ is $S$, and two vertices are connected by an edge if and only if their distance is equal to the diameter of $S$.

If a component of $G$ is a singleton $\{\p \}$, then we can apply the induction hypothesis to $S \setminus \{ \p \}$.

Next, assume that $G$ contains a $1$-valent vertex $\p$. Let the neighbor of $\p$ be denoted by $\q$.
Then any subset of cardinality $k-1$ in $S \setminus \{ \p,\q\}$ contains a diameter of $S$, as adding $\p$ to it contains a diameter of $S$, and this diameter does not contain $\p$.
Thus, in this case ${\rm card}(S \setminus \{ \p,\q \}) \leq 2k-3$, with equality if and only if it is the vertex set of a regular $(2k-3)$-gon. If  ${\rm card}(S \setminus \{ \p,\q \}) < 2k-3$, then we are done by induction. Otherwise
$S$ is contained in the Reuleaux-polygon defined by the vertices of a regular $(2k-3)$-gon, which yields that $\p,\q$ can be connected only to vertices of this polygon, and not to each other, a contradiction.

We are left with the case that every vertex of $G$ is connected to at least two other vertices. In this case $G$ contains an $m$-cycle $C$ for some value of $m$.
Observe that then there is a Reuleaux-polygon whose vertices are the vertices of $C$; this Reuleaux-polygon is obtained as the intersection of $m$ congruent disks centered at the vertices of $C$, and with radius equal to the diameter of $S$.
This Reuleaux-polygon contains $S$ in its boundary, which yields that $G$ is connected, and it is a cycle with some additional $1$-valent vertices attached to some of its vertices. Since we assumed that every vertex of $G$ has degree at least $2$, it follows that $G=C$, implying that 
${\rm card}(S) = m$. On the other hand, if $m \geq 2k$, then we can choose $k$ mutually disconnected vertices of $S$, which contradicts our assumption that $S$ is $k$-diametral. Thus, $m \leq 2k-1$, finishing the proof of Part {\bf (iv)}.

{\bf Proof of Part (v)}. First, we assume that $k \geq 4$. The estimate $2k\leq g_k(\Ee^3)$ is obtained from the example of the vertex set of a regular $(2k-1)$-gon $P\subset\Ee^3$, and an additional point in $\Ee^3$ whose distance from all vertices of $P$ is equal to the diameter of $P$.

Now, we prove that $g_k(\Ee^3)\leq 3k-2$. Consider a $k$-diametral point set $S \subset \Ee^3$. Let $G$ be the diameter graph of $S$. If $G$ does not contain an odd cycle then it is bipartite, and by $k$-diametrality we have $\mathrm{card} (S) \leq 2k-2$. Thus, we may assume that $G$ contains an odd cycle. Let $C$ be a shortest odd cycle of $G$ and let $G \setminus C$ denote the graph obtained by removing the vertices of $C$ from $G$, and also all edges of $G$ containing any vertex of $C$.
By \cite{Do}, we have that any two odd cycles in $G$ intersect (cf. also \cite{Sw}). Thus, $G \setminus C$ contains no odd cycle, which implies that this graph is bipartite with partite sets say, $V_1$ and $V_2$ forming a partition of the vertices of $G \setminus C$ such that there is no edge of $G$ between any two vertices of $V_1$ (resp., $V_2$).

Let the vertices of $C$ be $\p_1,\p_2, \ldots,\p_{2s-1}$ in cyclic order. Observe that if an edge of $G$ connects $\p_i$ and $\p_j$ with $|i-j| \not\equiv 1 \mod (2s-1)$, then $G$ contains an odd cycle shorter than $C$; a contradiction. Thus, there is no such edge of $G$ in $C$. Furthermore, any two vertices $\p_i$ and $\p_j$ of $C$ divide $C$ into two paths, disjoint apart from their endpoints, exactly one of which is odd. Since $C$ is a shortest odd cycle in $G$, this implies that if some vertex in $V_1 \cup V_2$ is connected to both $\p_i$ and $\p_j$, then $|i-j| \equiv 2 \mod (2s-1)$. Thus, no vertex of $V_1 \cup V_2$ is connected to more than two vertices of $C$, and if some vertex in $V_1 \cup V_2$ is connected to $\p_i$ and $\p_j$ with $i \neq j$, then $|i-j| \equiv 2 \mod (2s-1)$. (For this idea see \cite{Do}, or equivalently \cite{HL}.) 
Let $s_i:={\rm card}(V_i)$ for $i=1,2$, and without loss of generality, we assume that $s_1 \geq s_2$. Clearly, since $S$ is $k$-diametral, we have $s \leq k$ and $s_1 \leq k-1$, and we have $n:=\mathrm{card}(S) \leq 2s+2s_1-1$.
We give an upper bound for the quantity $2s+2s_1-1$ in the following way.

Let $G'$ be a graph containing a $(2t-1)$-cycle $A$, with vertices $\p_1,\ldots,\p_{2t-1}$ in cyclic order, such that the graph $B:=G' \setminus A$ contains no edges and possesses $s_1$ vertices, and $G'$ does not contain an empty $k$-vertex graph as a subgraph. Furthermore, assume that any vertex of $B$ is connected to at most $2$ vertices of $A$, and if some vertex of $B$ is connected to some $\p_i$ and $\p_j$ with $i \neq j$, then $|i-j| \equiv 2 \mod (2s-1)$.
Clearly, any upper bound on ${\rm card}(A)+2{\rm card}(B)$ under these conditions is an upper bound for ${\rm card}(S)$.

Note that since $G'$ contains no empty $k$-vertex subgraph, we have $t \leq k$ and ${\rm card}(B) := t' \leq k-1$.
To upper bound $2t-1+2t'$, we may assume that any vertex of $B$ is connected to exactly two vertices of $A$, as otherwise vertices of $B$  with degree less than $2$ can be connected to other vertices of $A$ without violating our conditions. In other words, we assume that any vertex of $B$ is connected to $\p_{i-1}$ and $\p_{i+1}$ for some (unique) value of $i$. For any value of $i$, let $X_i$ denote the set of vertices in $B$ that are connected to $\p_{i-1}$ and $\p_{i+1}$. Clearly, the sets $X_1, X_2, \ldots, X_{2t-1}$ form a partition of $B$, and in particular, if $i \neq j$ then $X_i \cap X_j = \emptyset$. Set $w_i := 1+\mathrm{card} (X_i)$ for all values of $i$. Note that for any value of $i$, there is no edge in $G'$ between any pair of vertices in $\bigcup_{j=1}^{t-1} \left(  X_{i+2j} \cup \{ \p_{i+2j}\}\right)$. Since $G'$ contains no empty $k$-vertex subgraph, this implies that $\sum_{j=1}^{t-1} w_{i+2j} \leq k-1$ for all values of $i$.
Thus,
\[
(t-1) \sum_{i=1}^{2t-1} w_i = \sum_{i=1}^{2t-1} \left( \sum_{j=1}^{t-1} w_{i+2j} \right) \leq (2t-1)(k-1).
\]
On the other hand, $\sum_{i=1}^{2t-1} w_i = {\rm card}(A) + {\rm card}(B) = 2t-1+t'$.
Thus, $(t-1)(2t-1+t') \leq (2t-1)(k-1)$, implying that $t' \leq \left\lfloor \frac{(2t-1)(k-t)}{t-1} \right\rfloor$. This yields that $t' \leq \min \left\{ k-1, \left\lfloor \frac{(2t-1)(k-t)}{t-1} \right\rfloor \right\}$
under the condition that $2 \leq t \leq k$. From this it follows that
\[
2t+2t'-1 \leq \min \left\{ 2t+2k-3, \left\lfloor \frac{(2t-1)(2k-t-1)}{t-1} \right\rfloor \right\}.
\] 
An elementary computation shows that $2t+2k-3 \leq \left\lfloor \frac{(2t-1)(2k-t-1)}{t-1} \right\rfloor$ if and only if
$t \leq \left\lceil \frac{k}{2} \right\rceil$, and also that the expression on the right-hand side is decreasing on the interval $t \in [2,k]$ for all $k \geq 4$. Since for $2 \leq t \leq \left\lceil \frac{k}{2} \right\rceil$, we have $2t+2k-3 \leq 3k-2$ and for
$\left\lceil \frac{k}{2} \right\rceil + 1 \leq t \leq k$ we have $\left\lfloor \frac{(2t-1)(2k-t-1)}{t-1} \right\rfloor  \leq 3k-2$ for all $k \geq 4$, it follows that ${\rm card}(S) \leq 3k-2$, finishing the proof of $2k\leq g_k(\Ee^3)\leq 3k-2$ for all $k \geq 4$.

Now we characterize the diameter graphs of the $3$-diametral sets $S \subset \Ee^3$ with $\mathrm{card} (S) \geq 6$.
First we show that the graphs shown in Figure~\ref{fig:graphs} can be obtained as diameter graphs of such sets.
The wheel graph in (1-a) belongs, for instance, to the union of the vertex set of a regular pentagon and a point whose distance from all vertices is equal to the diameter of the pentagon. To construct (1-b) or (1-c) we can remove at most two consecutive `axles' of the graph by slightly moving one or two vertices of the pentagon towards the additional point.

To construct (2-a), consider the vertex set $V$ of a regular tetrahedron of unit edge length, and let $\mathbf{P}$ denote the intersection of the four closed unit balls centered at a point of $V$. This set is called a \emph{regular ball-tetrahedron}, with vertices $V$, and the edges of $\mathbf{P}$ are circle arcs in $\mathrm{bd}(\mathbf{P})$ (of radius $\frac{\sqrt{3}}{2}$) connecting two distinct vertices, and contained in the boundary of two of the balls generating $\mathbf{P}$ (cf. \cite{BLNP}).
An elementary computation shows that the distance of the midpoints of two opposite edges of $\mathbf{P}$ is $\sqrt{3}-\frac{\sqrt{2}}{2} \approx 1.02 > 1$, and hence, we may choose two points on this segment in the interior of $\mathbf{P}$ at unit distance. To construct the graphs in (2-b)-(2-e), we may move the endpoints of the segment chosen in the previous example in a suitable way.

In the remaining part we show that no other graph is the diameter graph of a $3$-diametral set $S \subset \Ee^3$ with $\mathrm{card} (S) \geq 6$.
To prove this, we need

\begin{lemma}\label{lem:diametral}
The edge graph of a quadrangle based pyramid (cf. Figure~\ref{fig:graphs2}) is not a subgraph of the diameter graph of any point set in $\Ee^3$.
\end{lemma}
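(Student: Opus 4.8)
The plan is to argue by contradiction. Suppose the edge graph of the quadrangular pyramid---five distinct vertices consisting of an apex $a$ joined to all four base vertices $b_1,b_2,b_3,b_4$, together with the base $4$-cycle $b_1b_2b_3b_4$---occurs as a subgraph of the diameter graph of some point set $S\subset\Ee^3$ with diameter $D$. Then $a,b_1,\ldots,b_4$ are distinct points of $S$ with $\|a-b_i\|=D$ for every $i$ and $\|b_i-b_{i+1}\|=D$ cyclically, while the two ``missing'' base diagonals satisfy $\|b_1-b_3\|\le D$ and $\|b_2-b_4\|\le D$ because $D$ is the diameter. After rescaling so that $D=1$ and translating so that $a$ is the origin, the points $b_1,\ldots,b_4$ become unit vectors with $\iprod{b_i}{b_{i+1}}=\tfrac12$ cyclically, and the diagonal constraints read $x:=\iprod{b_1}{b_3}\ge\tfrac12$ and $y:=\iprod{b_2}{b_4}\ge\tfrac12$.

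Next I would introduce $u:=b_1+b_3$ and $v:=b_2+b_4$ and record the identities $\iprod{u}{v}=2$, $\|u\|^2=2+2x\ge 3$, and $\|v\|^2=2+2y\ge 3$, all immediate from the inner products above. The geometric heart of the argument is the orthogonality relations $b_1-b_3\perp b_2,\,b_4,\,u$ and $b_2-b_4\perp b_1,\,b_3,\,v$, each obtained by subtracting two equal inner products (and noting $\iprod{b_1-b_3}{b_1+b_3}=0$). Since the five points are distinct we have $b_1\neq b_3$ and $b_2\neq b_4$, so $N_1:=(b_1-b_3)^\perp$ and $N_2:=(b_2-b_4)^\perp$ are genuine $2$-dimensional planes in $\Ee^3$; the orthogonality relations (together with $\iprod{b_2-b_4}{b_1}=\iprod{b_2-b_4}{b_3}=0$) place both $u$ and $v$ in $N_1\cap N_2$.

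The proof then splits into two cases. If $N_1\neq N_2$, their intersection is a line, so $u$ and $v$ are parallel; as $\iprod{u}{v}=2>0$ and both are nonzero, writing $u=\lambda v$ with $\lambda=\iprod{u}{v}/\|v\|^2$ and comparing with $\|u\|^2=\lambda\iprod{u}{v}$ yields $(1+x)(1+y)=1$, which is impossible because $x,y\ge\tfrac12$ force $(1+x)(1+y)\ge\tfrac94$. If instead $N_1=N_2$, then $b_1,b_3\in N_2$ and $b_2,b_4\in N_1$ all lie in one plane, which also contains the origin $a$, so I reduce to a planar computation: writing $b_i$ as unit vectors at angles $\theta_i$, the condition $\iprod{b_i}{b_{i+1}}=\tfrac12$ makes each $\theta_{i+1}-\theta_i=\pm 60^\circ$, and closing the $4$-cycle forces exactly two $+60^\circ$ and two $-60^\circ$ steps; hence $\theta_3-\theta_1\in\{0^\circ,\pm120^\circ\}$, giving either $b_1=b_3$ (excluded) or $\iprod{b_1}{b_3}=-\tfrac12$, i.e.\ $\|b_1-b_3\|=\sqrt3>1$, contradicting that $1$ is the diameter. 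Either case yields a contradiction, proving the lemma.

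I expect the main obstacle to be the clean treatment of the degenerate coplanar case $N_1=N_2$ and, more generally, the dimension bookkeeping in $\Ee^3$: one must verify carefully that $b_1\neq b_3$ and $b_2\neq b_4$ genuinely make $N_1,N_2$ two-dimensional, confirm that $u,v$ really lie in $N_1\cap N_2$, and ensure that the single inequality $(1+x)(1+y)\ge\tfrac94$ in the generic case together with the planar angle analysis in the degenerate case truly exhausts all configurations.
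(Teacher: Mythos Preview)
Your argument is correct. The paper's own proof is much shorter and proceeds via spherical geometry: after normalizing so the apex is the origin and the diameter is $1$, the four base points lie on $\Ss^2$ and form a spherical quadrilateral with all four sides of spherical length $\pi/3$; the paper then asserts that this is a spherical rhombus and that at least one of its diagonals exceeds $\pi/3$, contradicting the diameter bound.

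Your route is genuinely different in presentation. Instead of invoking the spherical-rhombus picture, you extract the orthogonality relations $b_1-b_3\perp b_2,b_4$ and $b_2-b_4\perp b_1,b_3$ directly from the equalities $\iprod{b_i}{b_{i+1}}=\tfrac12$, and use the ambient dimension $3$ to force $u=b_1+b_3$ and $v=b_2+b_4$ to be parallel in the generic case, yielding the clean identity $(1+x)(1+y)=1$. This is, in fact, exactly the relation $\cos(\pi/3)=\cos(d_1/2)\cos(d_2/2)$ that the spherical Pythagorean theorem gives for the diagonals of a spherical rhombus of side $\pi/3$, so the core computation agrees with the paper's; what you gain is that your argument is entirely elementary linear algebra and does not rely on recognizing or justifying the rhombus structure. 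You also treat the degenerate coplanar case $N_1=N_2$ explicitly, whereas the paper absorbs it into the phrase ``it easily follows that these points are the vertices of a spherical rhombus''---in effect, your case analysis makes rigorous a step the paper leaves to the reader. (As a small bonus observation: in your Case~2 one can check that the $\pm120^\circ$ sub-case also forces $b_2=b_4$, so the contradiction with distinctness is available there too, though your diameter contradiction is already sufficient.)
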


\begin{figure}[ht]
\begin{center}
\includegraphics[width=0.18\textwidth]{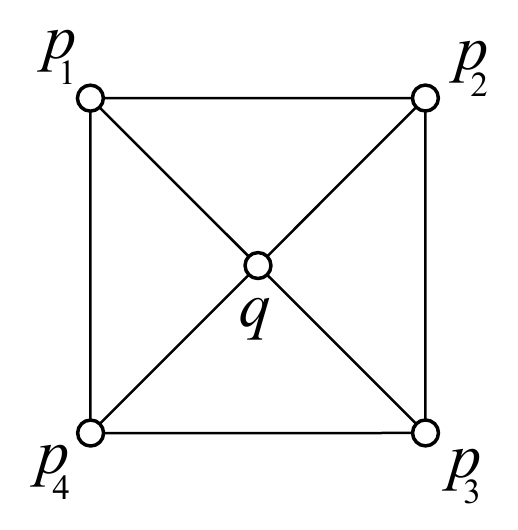}
\caption{A forbidden subgraph of diameter graphs of point sets in $\Ee^3$.}
\label{fig:graphs2}
\end{center}
\end{figure}

\begin{proof}
Suppose for contradiction that there  is some set $S=\{\p_1, \ldots, \p_4, \q \}$ of five distinct points in $\Ee^3$ such that the pairs connected by an edge in Figure~\ref{fig:graphs2} are diameters of $S$. Without loss of generality, we may assume that $\mathrm{diam}(S) = 1$ and that $\q$ is the origin, which yields that $\p_1, \ldots, \p_4$ are points of the sphere $\Ss^2$ such that the pairs $\p_i$, $\p_{i+1}$ are at spherical distance $\frac{\pi}{3}$ for $i=0,1,2,3$, where $\p_0 = \p_4$. From this it easily follows that these points are the vertices of a spherical rhombus of edge length $\frac{\pi}{3}$. On the other hand, from the triangle inequality it follows that in this case at least one diagonal is longer than $\frac{\pi}{3}$, which implies that $\mathrm{diam} (S) > 1$; a contradiction.
\end{proof}

Now we apply a similar argument as in the first part of the proof. Consider a $3$-diametral point set $S \subset \Ee^3$. Let $G$ be the diameter graph of $S$.
If $G$ is bipartite, then its vertex set $S$ can be partitioned into two parts such that no two vertices in the same part are connected. This, combined with the $3$-diametric property of $S$, clearly implies that $\mathrm{card} (S) \leq 4$. Thus, we restrict the investigation to graphs containing an odd cycle. 

Let $C$ be a shortest odd cycle of $G$ and let $G \setminus C$ denote the graph obtained by removing the vertices of $C$ from $G$, and also all edges of $G$ containing any vertex of $C$.
By \cite{Do}, we have that any two odd cycles in $G$ intersect. Thus, $G \setminus C$ contains no odd cycle, which implies that this graph is bipartite with partite sets say, $V_1$ and $V_2$ forming a partition of the vertices of $G \setminus C$ such that there is no edge of $G$ between any two vertices of $V_1$ (resp., $V_2$).

First, we show that $C$ is a $3$-cycle. By contradiction, assume that $C$ is not a $3$-cycle. Then the $3$-diametral property of $S$ implies that $C$ is a $5$-cycle.
Consider any vertex $\q$ of $G \setminus C$. Then for any nonadjacent pair of vertices of $C$, $\q$ is connected to at least one of them. This implies that $\q$ is connected to at least $3$ vertices of $C$, which yields that it is connected to two adjacent vertices. From this, it follows that $G$ contains a triangle; a contradiction, implying that $C$ is a $3$-cycle. Clearly, this implies that ${\rm card}(V_1), {\rm card}(V_2) \leq 2$, as $S$ is $3$-diametral. 
Let the vertices of $C$ be $\p_1, \p_2, \p_3$. We distinguish two cases.

\emph{Case 1}: $G$ does not contain $K_4$ (the complete graph on four vertices) as a subgraph.\\
First, we show that $\mathrm{card} (S) \leq 6$.
For a contradiction, assume that ${\rm card}(V_1)={\rm card}(V_2)=2$, and set $V_1 := \{ \x_1, \y_1 \}$ and $V_2 := \{ \x_2, \y_2\}$.
Since $G$ contains no $K_4$, any vertex of $V_1 \cup V_2$ is connected to at most two vertices $C$. Furthermore, since $S$ is $3$-diametral, no two vertices of $V_1 \cup V_2$ are connected to the same pair of vertices of $C$.

By $3$-diametrality, any vertex of $C$ is connected to at least one of $\x_1$ and $\y_1$.
This implies that $\x_1$ or $\y_1$ is connected to at least two vertices of $C$. Thus, without loss of generality, we may assume that $\x_1$ is connected to, say, $\p_1$ and $\p_2$, and not to $\p_3$, and $\y_1$ is connected to $\p_3$. Similarly, we may assume that $\x_2$ is connected to $\p_2,\p_3$ and not to $\p_1$, and $\y_2$ is connected to $\p_1$. Now Lemma~\ref{lem:diametral} implies that $\x_1$ and $\x_2$ are not connected. Furthermore, since $G$ is a diameter graph, by \cite{Do} $\x_1$ and $\y_2$ are not connected, as otherwise $\x_1, \y_2, \p_1$ and $\x_2, \p_2, \p_3$ would be two disjoint odd cycles. But then $\x_1, \x_2, \y_2$ are the vertices of an empty triangle, a contradiction.

Now we consider the case that $\mathrm{card}(S) = 6$. Then we may assume that $\mathrm{card} (V_1) = 2$ and $\mathrm{card} (V_2) = 1$, and set $V_1 := \{ \x_1, \y \}$, and $V_2 := \{ \x_2 \}$. Then, similarly like in the previous case, we may assume that $\x_1$ is connected to $\p_1$ and $\p_2$, and $\y$ is connected to $\p_3$.

\emph{Subcase 1.1}: $\y$ is not connected to $\p_1$ and $\p_2$.
Then, by $3$-diametrality, since the pairs $\{ \x_1, \y \}$, $\{ \x_1, \p_3 \}$, $\{ \y, \p_1 \}$ and $\{ \y, \p_2 \}$ are not connected, $\x_2$ is connected to at least one point from each pair. Thus, if $\x_2$ is not connected to $\y$, then it is connected to $\x, \p_1, \p_2$, and $G$ contains $K_4$; a contradiction, and we have that $\x_2$ is connected to $\y$. Furthermore, $\x_2$ is not connected to $\p_3$, as otherwise $G$ contains two disjoint triangles. This yields that $\x_2$ is connected to $\x_1$. Furthermore, since $G$ contains no $K_4$, $\x_2$ is not connected to $\p_1$ or $\p_2$. Since all other edges of $G$ have been determined, we have that if $\x_2$ is connected to $\p_1$ or $\p_2$, then $G$ is the graph in (1-b), and otherwise the graph in (1-c).

\emph{Subcase 1.2}: $\y$ is connected to $\p_1$ or $\p_2$. In this case, without loss of generality, we may assume that $\y$ is connected to $\p_2$ and it is not connected to $\p_1$. Using the same tools as in Subcase 1.1, we may obtain that $\x_2$ is connected to $\x_1$ and $\y$, and not connected to $\p_1$ and $\p_3$. Thus, depending on whether $\x_2$ is connected to $\p_2$ or not, $G$ is the graph in (1-a) or in (1-b), respectively.

\emph{Case 2}: $G$ contains $K_4$ as a subgraph.\\
First, we show that $\mathrm{card}(S) \leq 6$. For contradiction, set $V_1 := \{ \x_1, \y_1 \}$ and $V_2 := \{ \x_2, \y_2\}$.
Without loss of generality, we may assume that $\x_1$ is connected to all the $\p_i$s. This implies that neither $\x_2$ nor $\y_2$ is connected to at least three of the $\p_i$s and $\x_1$. Thus, without loss of generality, we may assume that $\x_2$ is connected to $\p_2$ and $\p_3$, and not to $\p_1$ and $\x_1$.
On the other hand, the fact that $\x_2$ and $\y_2$ are not connected implies that any of $\p_1, \p_2, \p_3, \x_1$ is connected to at least one of them, from which we have that $\y_2$ is connected to $\p_1$ and $\x_1$. But then $\{ \p_2, \p_3, \x_2 \}$ and $\{ \x_1, \y_2, \p_1\}$ are disjoint odd cycles, a contradiction (\cite{Do}).

Now we examine the case that $\mathrm{card}(S) = 6$. Let $V_1 := \{ \x_1, \y_1 \}$ and $V_2 := \{ \x_2\}$.
Consider the case that $\x_2$ is connected to all of the $\p_i$s, and let $T:= \{ \p_1, \p_2, \p_3, \x_2\}$. Since neither $\x_1$ nor $\y_1$ is connected to more than two points of $T$, and any point of $T$ is connected to at least one of $\x_1$ and $\y_1$, it follows that $\x_1$ is connected to exactly two points of $T$, say $\p_1$ and $\p_2$, and $\y_1$ is connected to the other two points. But then $\{ \x_1, \p_1, \p_2 \}$ and $\{ \y_1, \p_3, \x_2 \}$ are two disjoint odd cycles, contradicting \cite{Do}. Thus, we have that $\x_2$ is not connected to all vertices of $C$. This implies that one of $\x_1$ or $\y_1$, say $\x_1$, is connected to all $\p_i$s, and also that the remaining two points, $\y_1$ and $\x_2$, are connected. Thus, $G$ contains the graph in (2-a) as a subgraph. To investigate which other pairs of vertices can be connected or not, we may use the same tools as in Case 1, and hence, we omit it. This completes the proof of Part {\bf (v)}.

\section{Appendix}

Finally, for the sake of completeness, we call the reader's attention to Problem \ref{BNV-extended} below that investigates an interesting metric relative as well as extension of $f_k(\Md_{\K_{\oo}})$. In what follows, we give the relevant extension of Definition \ref{k-diametral} and a brief overview of the results that lead to Problem \ref{BNV-extended}.

\begin{definition}\label{k-equidistant}
Let $X:=\{\x_1,\x_2, \dots , \x_n\}\subset\Rd$ be a point configuration of $n$ points and $k\geq 2$ be an integer. We say that $X:=\{\x_1,\x_2, \dots , \x_n\}\subset\Rd$ is a {\rm $k$-equidistant point configuration of $n$ points in $\Md_{\K_{\oo}}$} if any $k$-tuple $\x_{n_1}, \x_{n_2}, \dots , \x_{n_k}$, $1\leq n_1< n_2 < \dots  < n_k\leq n$ chosen from $X$ contains two points lying at distance one in $\Md_{\K_{\oo}}$, i.e., there exist $\x_{n_i}$ and $\x_{n_j}$, $1\leq i<j\leq k$ such that $\| \x_{n_i}- \x_{n_j}\|_{\K_{\oo}} =1$. In particular, a $2$-equidistant point configuration is called a {\rm diametral} or simply an {\rm equilateral} point configuration, and a $3$-equidistant point configuration is called an {\rm almost equidistant} point configuration. Finally, let us denote the largest $n$ for which there exists a $k$-equidistant point configuration of $n$ points in $\Md_{\K_{\oo}}$, by $f^*_k(\Md_{\K_{\oo}})$ and call it the {\rm $k$-equidistant number} of point configurations in $\Md_{\K_{\oo}}$.
\end{definition}

\begin{remark}\label{basic-inequality}
Clearly, every $k$-diametral point configuration is a $k$-equidistant point configuration and therefore $f_k(\Md_{\K_{\oo}})\leq f^*_k(\Md_{\K_{\oo}})$ holds for all $d\geq 2$, $k\geq 2$, and $\K_{\oo}\in {\cal{K}}_{\oo}^{d}$.
\end{remark}

\begin{remark}\label{Bezdek-Naszodi-Visy}
We note that Bezdek, Nasz\'odi, and Visy \cite{BNV} introduced $f^*_k(\Md_{\K_{\oo}})$ under the name {\rm $k$th Petty number of $\Md_{\K_{\oo}}$} and defined it in the same way as Definition~\ref{k-equidistant}, but only for point configurations consisting of distinct points.
As the volumetric methods of \cite{BNV} extend to point configurations in general, therefore the upper bounds proved for $k$th Petty numbers in \cite{BNV} are upper bounds for $f^*_k(\Md_{\K_{\oo}})$ as well and so, the inequalities
\begin{equation}\label{BNV-1}
f^*_3(\Ee^2)=7, f^*_3(\Mm^2_{\K_{\oo}})\leq 8,\ {\text and}\ f^*_k(\Mm^2_{\K_{\oo}})\leq 8(k-1)
\end{equation}
\begin{equation}\label{BNV-2}
f^*_k(\Ed)\leq (k-1)3^d, f^*_3(\Md_{\K_{\oo}})\leq 2\cdot3^d, \ {\text and}\  f^*_k(\Md_{\K_{\oo}})\leq\min\left\{(k-1)4^d, (k-1)\left((k-1)3^d-(k-2)\right)\right\}
\end{equation}
hold for all $k\geq 4$, $d\geq 3$ and $\K_{\oo}\in {\cal{K}}_{\oo}^{d}$.
\end{remark}

\begin{remark}\label{KMS-P-result}
Recall that Kupavskii, Mustafa, and Swanepoel \cite{KMS} and also Polyanskii \cite{Po} introduced  $f^*_3(\Ed)$ the same way as Definition~\ref{k-equidistant}, but only for point configurations consisting of distinct points. As the elegant algebraic methods of \cite{KMS} as well as \cite{Po} extend to point configurations of $\Ed$, therefore the upper bound $O(d^{\frac{4}{3}})$ proved in \cite{KMS} as well as \cite{Po} works for $f^*_3(\Ed)$ as well. 
\end{remark}

\noindent Now, we are ready to state the open question which is an extension of the Problem of \cite{BNV} for point configurations in general.

\begin{problem}\label{BNV-extended}
Prove or disprove that $f^*_k(\Md_{\K_{\oo}})\leq (k-1)2^d$ holds for all $k\geq 2$, $d\geq 2$ and $\K_{\oo}\in {\cal{K}}_{\oo}^{d}$.
\end{problem}

\begin{remark}\label{BNV-special}
As the method of proof of Theorem 3 in \cite{BNV} extends to point configurations in general, therefore if $\K_{\oo}$ is an $\oo$-symmetric affine $d$-cube of $\Rd$, then $f^*_k(\Md_{\K_{\oo}})=(k-1)2^d$ holds for all $k\geq 2$, $d\geq 2$. 
\end{remark}

\small

\bigskip


\noindent K\'aroly Bezdek \\
\small{Department of Mathematics and Statistics, University of Calgary, Canada}\\
\small{Department of Mathematics, University of Pannonia, Veszpr\'em, Hungary\\
\small{E-mail: \texttt{bezdek@math.ucalgary.ca}}

\bigskip

\noindent and

\bigskip

\noindent Zsolt L\'angi \\
\small{MTA-BME Morphodynamics Research Group and Department of Geometry}\\ 
\small{Budapest University of Technology and Economics, Budapest, Hungary}\\
\small{\texttt{zlangi@math.bme.hu}}

\end{document}